\title{
Leray weak solutions of the\\ Incompressible Navier Stokes system on exterior domains via the artificial compressibility method
}
\author{\textbf{Donatella Donatelli and Pierangelo Marcati}\\
        {\small Dipartimento di Matematica Pura ed Applicata}\\
       {\small Universit\`a degli Studi dell'Aquila}\\
       {\small 67100 L'Aquila, Italy}\\
        $\scriptstyle\mathtt{\{donatell, marcati@univaq.it\}}$\\}
\date{}
\newcommand{\e}{\varepsilon}		       
\newcommand{\R}{\mathbb{R}}
\newcommand{\ue}{u^{\varepsilon}}
\newcommand{\ut}{\tilde{u}}
\newcommand{\pe}{p^{\varepsilon}}
\newcommand{\pt}{\tilde{p}}
\newcommand{\dive}{\mathop{\mathrm {div}}}
\newtheorem{theorem}{Theorem}[section]   
\newtheorem{corollary}[theorem]{Corollary}
\newtheorem{lemma}[theorem]{Lemma}
\newtheorem{proposition}[theorem]{Proposition}
\theoremstyle{definition}
\newtheorem{definition}[theorem]{Definition}
\newtheorem{remark}[theorem]{Remark}
\numberwithin{equation}{section}
\begin{document}

\maketitle
\begin{abstract}
In this paper we study  the Leray weak solutions of the incompressible Navier Stokes equation in an exterior domain.We describe, in particular,  an hyperbolic version of the so called artificial compressibility method investigated by J.L.Lions and Temam. The convergence of these type of approximation show in general  a lack of strong convergence due to the presence of acoustic waves. In this paper we face this difficulty  by taking care of the dispersive nature of these waves  by means of the Strichartz estimates or waves equations satisfied by the pressure. We actually decompose the pressure in different acoustic components, each one of them satisfies
a specific initial boundary value problem. The strong convergence analysis of the velocity field will be achieved by using the associated Leray-Hodge decomposition. 
\medbreak 
\textbf{Key words and phrases:} 
incompressible Navier Stokes equation; exterior domain; wave equations
\medbreak
\textbf{1991 Mathematics Subject Classification.} Primary 35Q30; Secondary
35Q35, 76D03, 76D05.
\end{abstract}
\section{Introduction}

This paper is concerned with the description of a type of approximation method for the weak solutions of the $3-D$ Navier Stokes equations over an exterior domain $\Omega$. We say that $\Omega$ is an exterior domain if it is the complement in $\R^{3}$ of a compact set  (usually called compact obstacle). In order to solve the Navier Stokes equations in an exterior domain we need to find the velocity and the pressure fields which together solve the equations and which moreover assume given boundary data on the obstacle. This latter requirement contains usually the most relevant difficulty in this type of analysis.\\
The most simple examples of such kind of flow are fluids  filling up space and flowing past spheres, plates and cylinders as the flow of a river around the stones lying on the riverbed. The interest in studying the Navier Stokes equations in such kind of domains  arises from many phenomena in physics and  applications in engineering models (see \cite{WH07} and references therein). More complicated examples are the motion of bubbles in a liquid ( we may think at the bubbles in the ocean) and the sedimentation of particles. In these cases it is important to determine the forces that the fluid exerts on the structures. Similar phenomena comes also from climate modeling as for the rain drops falling within clouds in the high atmosphere and for these reasons even the flow around the sphere, as the simples example of a falling particle, is still a subject of great interest \cite{N04}. Other examples can be included in the same framework from engineering models like the design of project  aircrafts wings with a high speed airfoil. Interesting information concerning these questions can also be obtained by studying insect flight, \cite{WZ00}. Indeed,  if we consider the air as a fluid the insect can be seen as a moving obstacle. These kind of measurements are considered important and have many applications in construction micro-air vehicles for reconnaissance mission and space missions as the flight in the atmosphere of Mars, \cite{MN03}. A final example comes from the area of medicine  in the study of hemodynamics where it could be of great importance to understand and model the blood flow around an embolus or a prothesis.\\
The mathematical model of an incompressible fluid in an exterior domain is given by the following set of equations
\begin{equation}
    \begin{cases}
	\partial_{t}u+\dive(u\otimes u)-\mu\Delta u=\nabla p+f & x\in\Omega,\ t\geq 0\\
	\dive u=0 & x\in\Omega,\ t\geq 0,\\
	u(x,0)=u_{0}(x) &x\in\Omega,\\
	u|_{\partial\Omega}=0, \quad \lim_{|x|\to\infty}u=0, 
	\end{cases}
    \label{3.1}
\end{equation}
where $\Omega$ is  an exterior domain of $\R^{3}$, $u\in\R^{3}$ denotes the velocity vector field , $p\in \R$ the pressure of the fluid , $f\in \R^{3}$ is a given external force, $\mu$ is the kinematic viscosity.
In the case of  the whole space $\R^{d}$ or of a bounded domain there exists, in the mathematical literature, several results concerning the existence and regularity of Leray weak solutions to the Navier Stokes equations, for example we can refer to books of P.L.Lions \cite{LPL96} and Temam \cite{Tem01}. The exterior problem  for the Navier Stokes equation consists of finding in the region exterior to a closed bounded surface, velocity and pressure functions which together solve the equations and are such that the velocity assumes given values on the surface and tend to a prescribed limit at infinity. For what concerns the existence theory of weak solutions  in the case of an exterior domain a first result can be found in the paper of Leray \cite{Ler34}. We recall here what we mean for a Leray weak solution of the system \eqref{3.1}.
\begin{definition}
We say that $u$ is a Leray weak solution of the Navier Stokes equation if it 
satisfies  \eqref{3.1} in the sense of distributions, namely
\begin{align*}
&\int_{0}^{T}\!\!\int_{\Omega}\left(\nabla u\cdot\nabla\varphi -u_{i}u_{j}\partial_{i}\varphi_{j}-u\cdot
\frac{\partial \varphi}{\partial t}\right) dxdt\\
&=\int_{0}^{T}\langle f ,\varphi \rangle_{H^{-1}\times H^{1}_{0}} dxdt+\int_{\Omega}u_{0}\cdot \varphi dx,
\end{align*}
for all $\varphi\in C^{\infty}_{0}(\Omega\times[0,T])$, $\dive \varphi =0$ and
\begin{equation*}
\dive u=0 \qquad \text{in $\mathcal{D}'(\Omega\times[0,T])$}
\end{equation*}
and the following energy inequality holds
\begin{align*}
\frac{1}{2}\int_{\Omega}&|u(x,t)|^{2}dx+\mu\int_{0}^{t}\!\!\int_{\Omega}|\nabla u(x,t)|^{2}dxds\\ \leq
&\frac{1}{2}\int_{\Omega}|u_{0}|^{2}dx +\int_{0}^{t}\langle f ,u \rangle_{H^{-1}\times H^{1}_{0}}ds,\qquad \text{for all $t\geq 0$}.
\end{align*}
\end{definition}
Similarly to the problems on the whole space or on bounded domains, also here, there is a large amount of literature concerning the  regularity of solutions. For instance, Finn in \cite{Fi59a}, \cite{Fi59b}, \cite{Fi61}, \cite{Fi65} studied the exterior stationary problem  within the class of solutions, called by him as ``physically reasonable'',  which tend to a limit at infinity like $|x|^{-\frac{1}{2}-\eta}$ for some $\eta>0$. For small data he proved both existence and uniqueness in this class. In the case of nonstationary flow the question of existence and uniqueness was later addressed by Heywood. In \cite{He72} he recovered stationary solutions as the limit of nonstationary solutions. Then, he also studied the stability and regularity of  nonstationary solutions in \cite{He74a}, \cite{He74b}, \cite{He79}, \cite{He80}. Further regularity properties of solutions in exterior domain can be found in \cite{GaM86a}, \cite{GaM86b}, \cite{Ma87}, \cite{MS88}, \cite{KY98}. In \cite{GH07} the theory of the Navier Stokes equations was developed considering a moving or a rotating obstacle.
 
Motivated by the previous examples and applications there have been also considerable efforts to develop numerical approximation methods. One of the major difficulty regards the development of numerical schemes including in an efficient way the incompressibility constraints. Chorin \cite{Ch68}, \cite{Ch69}, Temam \cite{Tem69a}, \cite{Tem69b} and Oskolkov \cite{Osk}, in the case of a bounded domain,  to overcome the computational  difficulties connected with   the incompressibility constraints introduced what they named ``artificial compressibility approximation''. They considered a family of perturbed systems, depending on a positive parameter $\e$, which approximate in the limit the Navier Stokes equation and which contain a sort of ``linearized'' compressibility condition, namely the following system
\begin{equation}
\begin{cases}
\displaystyle{\partial_{t}\ue+\nabla \pe=\mu\Delta \ue-\left(\ue\cdot\nabla\right)\ue -\frac{1}{2}(\dive \ue)\ue}+f^{\e}\\
\e\partial_{t}\pe+ \dive \ue=0,
\end{cases}
\label{1.1}
\end{equation}
where $\Omega\in \R^{3}$, $(x,t)\in \Omega\times [0,T]$, $\ue=\ue(x,t)\in\R^{3}$  and $\pe=\pe(x,t)\in \R$, $f^{\e}=f^{\e}(x,t)\in \R^{3}$. \\
The papers of Temam \cite{Tem69a}, \cite{Tem69b} and his book \cite{Tem01} discuss the convergence of these approximations on bounded domains by exploiting the classical Sobolev compactness embedding and they recover compactness in time by the well known J.L. Lions \cite{L-JL59} method of fractional derivatives. 
The same system was used in \cite{DM06} in the case of the Navier Stokes equations in the whole space $\R^{3}$ and modified in a suitable way in \cite{Don08}  for the Navier Stokes Fourier system in $\R^{3}$. In  those papers the authors have carefully to estimate  the acoustic waves for the pressure $\pe$ which is the cause of the lost of the strong convergence. In order to overcome these difficulties they exploit the dispersive properties of these waves, by using  $L^{p}-L^{q}$ estimates of Strichartz type \cite{GV95}, \cite{KT98}, \cite{S77}. 
Here, in order to approximate the system \eqref{3.1} we introduce the system \eqref{1.1} with appropriate boundary data, which makes the problem considerably more difficult as we will see later on. We can see from the second equation of \eqref{1.1} that as $\e$ goes to $0$, the acoustic pressure waves propagate with high speed of order $1/\e$ in the space domain. Because of the fast propagating of the acoustics one expect the velocity $\ue$ to converges only weakly to the incompressible solution of the Navier Stokes equation. Here we overcome this trouble by using the dispersion of these waves at infinity obtaining the strong convergence of $\ue$. In this paper  we need  dispersive estimates of Strichartz type  on  exterior domains.
 These estimates have been recovered by Smith and Sogge, \cite{SmSo95}, \cite{SmSo00} in the case of odd space dimension and by Burq \cite{Burq03} and Metcalfe \cite{Met04} for even space dimension (see Section \ref{secstr}). 
 The connection with the dispersive analysis of the acoustic wave equation has also been considered 
 to study the incompressible limit problem. Similar phenomena appear also in the modeling the Debye screening effect for semiconductor devices,  \cite{DM08}. It is worth to mention here that this type of singular limits from hyperbolic to parabolic systems is not covered and doesn't fits in the general framework of diffusive limits analyzed in \cite{DM04}.
 
In order to understand the  additional difficulty with respect to the whole domain case it is important to remark that the presence of a boundary requires consistent boundary conditions for the approximating problems. This  problem  is not present for the approximating velocity $\ue$ since the limit velocity satisfies Dirichlet boundary condition, hence  we can impose the same condition on $\ue$. Unfortunately for the pressure we have a different situation, indeed for the limit problem (incompressible Navier Stokes) the natural physical boundary condition is of Neumann type, while the wave equation structure for $\pe$ and the Strichartz estimates require Dirichlet type conditions. This problem is well known  also in the numerical literature (see for instance Chorin \cite{Ch67}, Gresho and Sani \cite{GS87}, Sani, Shen and al. \cite{GSSP06} and the references therein). Motivated by the formal analysis of  the previous mentioned papers we introduce here appropriate Dirichlet boundary conditions for the pressure which are expected to be  consistent with the  Neumann boundary condition of the limit problem (see Section 3 for further details). However, to make our analysis rigorous it will be necessary to decompose the approximating pressure in different acoustic components, each one of them with its own appropriate boundary conditions.

This paper is organized as follows. In Section 2 we recall the mathematical tools and basic definitions that we need through the paper and we describe the Strichartz estimate we are going to use. In Section 3 we introduce the approximating system and we state our main result. The Section 4 is devoted to the a priori estimates that are derived from standard energy type estimates. In Section 5 we recover further estimates by exploiting the wave equation structure for the pressure. In Section 6 we show the strong convergence of the approximating sequences. Finally, in Section 7 we prove our main result.

\section{Notations and Preliminaries}

For convenience of the reader we establish some notations and recall some basic theorems that will be useful in the sequel.\\
From now on $\Omega$ denotes an exterior domain to a compact obstacle in $\R^{d}$.  Precisely, $\Omega$ is the complement in $\R^{d}$ to a compact, strictly convex, smooth set contained in $\{|x|\leq R\}$. Moreover, $\Omega$ is assumed to be non trapping in the sense that there is a number $L_{R}$ such that no geodesic of length $L_{R}$ is completely contained in $\{|x|\leq R\}\cap \Omega$.
\subsection{Definition of spaces and Operators}
We will denote by  $\mathcal{D}(\Omega \times \R_+)$
 the space of test function
$C^{\infty}_{0}(\Omega \times \R_+)$, by $\mathcal{D}'(\Omega \times
\R_+)$ the space of Schwartz distributions and $\langle \cdot, \cdot \rangle$
the duality bracket between $\mathcal{D}'$ and $\mathcal{D}$. Moreover
$W^{k,p}(\Omega)=(I-\Delta)^{-\frac{k}{2}}L^{p}(\Omega)$ and $H^{k}(\Omega)=W^{k,2}(\Omega)$ denote the nonhomogeneous Sobolev spaces for any $1\leq p\leq \infty$ and $k\in \R$. The notations
 $L^{p}_{t}L^{q}_{x}$ and $L^{p}_{t}W^{k,q}_{x}$ will abbreviate respectively  the spaces $L^{p}([0,T];L^{q}(\Omega))$ and $L^{p}([0,T];W^{k,q}(\Omega))$. In the definition of homogenous Sobolev spaces we have to be more precise. The homogeneous Sobolev norm $\dot{H}^{\gamma}(\R^{d})$  on the whole space $\R^{d}$ is given by $\|f\|_{\dot{H}^{\gamma}(\R^{d})}=\|(\sqrt -\Delta)^{\gamma}f\|_{L^{2}(\R^{d})}$. On the exterior domain $\Omega$ we have to define the space  $\dot{H}_{D}^{\gamma}(\Omega)$, $\gamma\in\R$ which is the homogeneous Sobolev space associated to the square root of the Laplace operator with Dirichlet boundary condition on $\Omega$, $\sqrt{-\Delta_{D}}$. To be more precise fix $\beta\in C^{\infty}_{0}(\R^{d})$ a smooth cutoff function such that $\beta(x)=1$ for  $|x|\leq R$ and let be $\widetilde{\Omega}$ a compact manifold with the boundary containing $B_{R}=\Omega\cap \{|x|\leq R\} $, we are able to define  
 $$\|f\|_{\dot{H}^{\gamma}_{D}(\Omega)}=\|\beta f\|_{\dot H_{D}^{\gamma}(\widetilde{\Omega})}+\|(1-\beta)f\|_{\dot{H}^{\gamma}(\R^{d})}.$$
 Notice that for functions with support in $\{|x|\leq R\}$ we have
 $$\|f\|_{\dot{H}^{\gamma}_{D}(\Omega)}=\| f\|_{\dot H_{D}^{\gamma}(\widetilde{\Omega})}.$$
 Functions $f\in \dot H_{D}^{\gamma}(\widetilde{\Omega})$ satisfy the Dirichlet conditions $f|_{\partial \widetilde{\Omega}}=0$ and when $\gamma\geq 2$ we must  require the compatibility condition
 $$\Delta^{j}f |_{\partial\widetilde{\Omega}}=0\qquad \text{for $2j<\gamma$}.$$
With the Dirichlet condition fixed we may define the spaces $\dot H^{\gamma}(\widetilde{\Omega})$ in terms of eigenfunctions of $\Delta$. Since $\widetilde{\Omega}$ is compact we have $\{v_{j}\}\subset H^{\gamma}_{D}(\widetilde{\Omega})\cap C^{\infty}(\widetilde{\Omega})$ an orthonormal basis of $L^{2}(\widetilde{\Omega})$ with $\Delta v_{j}=-\lambda_{j}v_{j}$, $\lambda_{j}>0$, $\lambda_{j}\uparrow\infty$. For $\gamma\geq 0$ we define
$$\dot{H}_{D}^{\gamma}(\widetilde{\Omega})=\bigg\{f\in L^{2}(\widetilde{\Omega})\mid \sum_{j\geq 0}|\hat{f}(j)|^{2}\lambda_{j}^{\gamma}<+\infty\bigg\},$$
where $\hat{f}(j)=(f,v_{j})$. The $\dot{H}_{D}^{\gamma}$ norm is given by
$$\|f\|_{\dot{H}^{\gamma}(\widetilde{\Omega})}^{2}=\sum_{j\geq 0}|\hat{f}(j)|^{2}\lambda_{j}^{\gamma}.$$
For $\gamma<0$, we define $\dot{H}^{\gamma}_{D}(\Omega)$ in term of duality. Moreover we mention that  for $r<s$,
$$\|f\|_{\dot{H}^{r}_{D}(\Omega)}\leq C \|f\|_{\dot{H}^{s}_{D}(\Omega)}.$$
For further details see \cite{Burq03} and \cite{Met04}.\\

Let us define in the case of an exterior domain the   Leray's projector $P$ on the space of divergence - free vector fields and $Q$ on the space of gradients vector fields.
Any vector field $\overrightarrow{F}$ on $\Omega$ with $\overrightarrow{F}|_{\partial \Omega}=0$ can be decomposed in the form
$$\overrightarrow{F}=\overrightarrow{K}+\nabla \Lambda,$$
where $\dive \overrightarrow{K}=0$ and $\overrightarrow{K}\cdot n=0$ on $\partial\Omega$ ($n$ is the exterior normal vector to $\partial\Omega$). In fact we can recover $\Lambda$ as a solution of the following system
$$\begin{cases}
\Delta \Lambda=\dive F\\
\displaystyle{\frac{\partial\Lambda}{\partial n}=0\  \text{in $\partial\Omega$},\qquad \int_{\Omega}\Lambda=0},
\end{cases}$$
that is well known has a unique solution (see \cite{CM93}). It turns out that $P$ and $Q$ assume the form 
\begin{equation}
Q=\nabla \Delta^{-1}_{N}\dive\qquad P=I-Q,
\label{2.1.1}
\end{equation} 
where $ \Delta^{-1}_{N}$ is the inverse of the Laplace operator with Neumann boundary condition. It can be proved that $P$ and $Q$ are projections. Let us remark that contrary on what happens in the case of the whole domain, here  $P$ and $Q$ don't commute with translations and so with derivatives, but they are still bounded in $L^{p}$ and so in  $W^{k,p}$ $(1< p<\infty)$ space (see \cite{FKS07} and references therein). 
\subsection{Strichartz estimates in exterior domain}
\label{secstr}
Let us consider the following wave equation defined in the space $[0,T]\times \R^{d}$
\begin{equation*}
\begin{cases}
\left(\partial ^{2}_{t}-\Delta\right)w(t,x)=F(t,x)\\
w(0,\cdot)=f,\quad \partial_{t}w(0,\cdot)=g,
\end{cases}
\end{equation*}
for some data $f,g, F$ and time $0<T<\infty$.
As is well known the wave equation belongs to the so called dispersive equations.  In 1977 Strichartz \cite{S77} realized that combining the dispersive properties of the wave equation with the restriction theorem of the Fourier transform on manifolds he could set up the following estimate
\begin{equation}
\|w\|_{L^{4}_{t}L^{4}_{x}}+\|\partial_{t}w\|_{L^{4}_{t}W^{-1,4}_{x}}\lesssim \|f\|_{\dot H^{1/2}}+\|g\|_{\dot H^{-1/2}}+\|F\|_{L^{4/3}_{t}L^{4/3}_{x}}.
\label{2.2.6}
\end{equation}
Later on this estimate was generalized to the following one (see \cite{GV95}, \cite{KT98})
\begin{equation}
\|w\|_{L^{q}_{t}L^{r}_{x}}+\|\partial_{t}w\|_{L^{q}_{t}W^{-1,r}_{x}}\lesssim \|f\|_{\dot H^{\gamma}_{x}}+\|g\|_{\dot H^{\gamma -1}_{x}}+\|F\|_{L^{\tilde{q}'}_{t}L^{\tilde{r}'}_{x}},
\label{2.2.5}
\end{equation}
where $(q,r)$, $(\tilde{q},\tilde{r})$ have to be  \emph{wave admissible pairs}, namely they satisfy 
\begin{equation}
\left .
\begin{split}
\frac{2}{q}\leq (d-1)&\left(\frac{1}{2}-\frac{1}{r}\right) \qquad 
\frac{2}{\tilde{q}}\leq (d-1)\left(\frac{1}{2}-\frac{1}{\tilde{r}}\right)\\
&\frac{1}{q}+\frac{d}{r}=\frac{d}{2}-\gamma=\frac{1}{\tilde{q}'}+\frac{d}{\tilde{r}'}-2.
\label{2.2.3}
\end{split}\right\}
\end{equation}
The estimate \eqref{2.2.5} still go under the name of Strichartz estimate.
A further generalization of the estimate \eqref{2.2.5} is given when we consider the wave equation on an exterior domain with Dirichlet boundary conditions, namely $w$ is a solution of the following system
\begin{equation*}
\begin{cases}
\left(\partial_{t}^{2}-\Delta\right)w(t,x)=F(t,x) & (t,x)\in \R_{+}\times \Omega\\
w(0,\cdot)=f(x)\in \dot{H}^{\gamma}_{D}\\
\partial_{t}w(0,x)=g(x)\in \dot{H}^{\gamma-1}_{D}\\
w(t,x)=0, &x\in\partial\Omega,
\end{cases}
\end{equation*}
for some data $f,g, F$ and time $0<T<\infty$.
Then, $w$ satifies the following Strichartz estimate, 
\begin{equation}
\|w\|_{L^{q}_{t}L^{r}_{x}}+\|\partial_{t}w\|_{L^{q}_{t}W^{-1,r}_{x}}\lesssim \|f\|_{\dot H^{\gamma}_{D}}+\|g\|_{\dot H^{\gamma -1}_{D}}+\|F\|_{L^{\tilde{q}'}_{t}L^{\tilde{r}'}_{x}},
\label{2.2.1}
\end{equation}
provided that $(q,r)$, $(\tilde{q},\tilde{r})$ are \emph{wave admissible pairs}
in the sense of \eqref{2.2.3}. As we can observe
the estimate \eqref{2.2.1} has the same structure as the one  \eqref{2.2.5} on  the whole space $\R^{d}$, but in order to prove it is necessary to establish new decay estimates. These estimates are different if the space dimension is odd or even because of the lack of strong Huygen's principle in the latter case. The local Strichartz estimate for the homogenous case was  proved  by Smith and Sogge in \cite{SmSo95}. Then in \cite{SmSo00} they established for the nonhomogenous wave equation the global estimate in space and time for odd space dimension. The even space dimension estimate  was obtained independently by Metcalfe \cite{Met04}  and Burq \cite{Burq03}. \\
Of course the estimate \eqref{2.2.1} includes the case of the original  estimate obtained by Strichartz in 1977 \cite{S77}, namely 
\begin{equation}
\|w\|_{L^{4}_{t}L^{4}_{x}}+\|\partial_{t}w\|_{L^{4}_{t}W^{-1,4}_{x}}\lesssim \|f\|_{\dot H^{1/2}_{D}}+\|g\|_{\dot H^{-1/2}_{D}}+\|F\|_{L^{4/3}_{t}L^{4/3}_{x}},
\label{2.2.2}
\end{equation}
From the estimate \eqref{2.2.2} can be deduced the following one that will be usefull for us in the sequel,
\begin{equation}
\|w\|_{L^{4}_{t}L^{4}_{x}}+\|\partial_{t}w\|_{L^{4}_{t}W^{-1,4}_{x}}\lesssim \|f\|_{\dot H^{1/2}_{D}}+\|g\|_{\dot H^{-1/2}_{D}}+\|F\|_{L^{1}_{t}L^{2}_{x}}.
\label{s1}
\end{equation}
Later on we shall also use \eqref{2.2.1} in the case of $d=3$, $(\tilde{q}', {\tilde{r}'})=(1, 3/2)$,  then $\gamma=1/2$ and $(q,r)=(4,4)$, namely the following estimate
\begin{equation}
\|w\|_{L^{4}_{t,x}}+\|\partial_{t}w\|_{L^{4}_{t}W^{-1,4}_{x}}\lesssim \|f\|_{\dot H^{1/2}_{D}}+\|g\|_{\dot H^{ -1/2}_{D}}+\|F\|_{L^{1}_{t}L^{3/2}_{x}}.
\label{s2}
\end{equation}
\subsection{Preliminary theorems and lemma}
Finally we mention here the following technical lemma that will allow us to get $L^{p}$ estimate by means of $W^{-k,p}$ norms.
\begin{lemma}
\label{ly}
Let us consider  a smoothing kernel $j\in C^{\infty}_{0}(\Omega)$, such that $j\geq 0$, $\int_{\R^{d}}j dx=1$, and define the Friedrichs mollifiers as
\begin{equation*}
j_{\alpha}(x)=\alpha^{-d}j\left(\frac{x}{\alpha}\right).
\end{equation*}
Then  for any $f\in \dot H^{1}(\Omega)$, one has
\begin{equation}
\label{y1}
\|f-f\ast j_{\alpha}\|_{L^{p}(\Omega)}\leq C_{p}\alpha^{1-\sigma}\|\nabla f\|_{L^{2}(\Omega)},
\end{equation}
where
\begin{equation*}
p\in [2, \infty)
\quad \text{if $d=2$}, \quad p\in [2, 6] \quad \text{if $d=3$ \ and}\quad \sigma=d\left(\frac{1}{2}-\frac{1}{p}\right).
\end{equation*}
Moreover the following Young type inequality holds
\begin{equation}
\label{y2}
\|f\ast j_{\alpha}\|_{L^{p}(\Omega)}\leq C\alpha^{s-d\left(\frac{1}{q}-\frac{1}{p}\right)}\|f\|_{W^{-s,q}(\Omega)},
\end{equation}
for any $p,q\in [1, \infty]$, $q\leq p$,  $s\geq 0$, $\alpha\in(0,1)$.
\end{lemma}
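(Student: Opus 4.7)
The plan for \eqref{y1} is to prove it at the two endpoints $p=2$ and $p=p^{*}$ and then interpolate. For the $L^{2}$ endpoint I would start from the pointwise representation
$$f(x)-(f\ast j_{\alpha})(x)=\int j_{\alpha}(y)\bigl[f(x)-f(x-y)\bigr]\,dy=\int j_{\alpha}(y)\int_{0}^{1}y\cdot\nabla f(x-ty)\,dt\,dy,$$
apply Minkowski's integral inequality, and exploit $\mathrm{supp}\,j_{\alpha}\subset B_{\alpha}$ together with translation-invariance of $L^{2}$ (after extending $f$ by zero outside $\Omega$, which does not increase $\|\nabla f\|_{L^{2}}$) to obtain $\|f-f\ast j_{\alpha}\|_{L^{2}}\le C\alpha\|\nabla f\|_{L^{2}}$. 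At the Sobolev endpoint $p^{*}=\tfrac{2d}{d-2}$ (so $p^{*}=6$ in $d=3$, and any $p^{*}<\infty$ in $d=2$) I would apply Sobolev embedding $\dot H^{1}\hookrightarrow L^{p^{*}}$ to both $f$ and $f\ast j_{\alpha}$, using $\|\nabla(f\ast j_{\alpha})\|_{L^{2}}\le\|\nabla f\|_{L^{2}}\|j_{\alpha}\|_{L^{1}}=\|\nabla f\|_{L^{2}}$, to deduce $\|f-f\ast j_{\alpha}\|_{L^{p^{*}}}\le C\|\nabla f\|_{L^{2}}$ (with no factor of $\alpha$). Log-convexity of $L^{p}$ norms then yields $\|f-f\ast j_{\alpha}\|_{L^{p}}\le C\alpha^{\theta}\|\nabla f\|_{L^{2}}$ where $\tfrac{1}{p}=\tfrac{\theta}{2}+\tfrac{1-\theta}{p^{*}}$, and an elementary check gives $\theta=1-d(\tfrac12-\tfrac1p)=1-\sigma$. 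In $d=2$ the same scheme works with $p^{*}$ replaced by a large finite exponent, or more directly by a Gagliardo--Nirenberg interpolation in place of Riesz--Thorin.

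For \eqref{y2} the strategy is Young's convolution inequality combined with scaling. Write $f=(I-\Delta)^{s/2}g$ with $g=(I-\Delta)^{-s/2}f$, so that $\|g\|_{L^{q}}=\|f\|_{W^{-s,q}}$ by the definition of the Bessel-potential spaces from Section~2. After extending $g$ by zero outside $\Omega$ and using that $(I-\Delta)^{s/2}$ commutes with convolution on $\R^{d}$, one obtains
$$f\ast j_{\alpha}=g\ast\bigl((I-\Delta)^{s/2}j_{\alpha}\bigr).$$
Young's inequality with $\tfrac1p=\tfrac1q+\tfrac1r-1$ (so that $\tfrac1{r'}=\tfrac1q-\tfrac1p$) then gives
$$\|f\ast j_{\alpha}\|_{L^{p}(\Omega)}\le \|g\|_{L^{q}}\,\bigl\|(I-\Delta)^{s/2}j_{\alpha}\bigr\|_{L^{r}(\R^{d})}.$$
The scaling $j_{\alpha}(x)=\alpha^{-d}j(x/\alpha)$ produces $\|\partial^{\beta}j_{\alpha}\|_{L^{r}}=\alpha^{-|\beta|-d/r'}\|\partial^{\beta}j\|_{L^{r}}$, and summing over $|\beta|\le s$ (with the $|\beta|=s$ contribution dominating as $\alpha\in(0,1)$) bounds $\|(I-\Delta)^{s/2}j_{\alpha}\|_{L^{r}}$ by a constant multiple of $\alpha^{-s-d(1/q-1/p)}$, which produces the announced estimate. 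Non-integer $s$ is handled by interpolating between two consecutive integer levels.

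The main subtlety, and essentially the only real obstacle, is bookkeeping the passage from the $\R^{d}$ Fourier-multiplier/Young's-inequality computation above to the space $W^{-s,q}(\Omega)$. Since the mollifier has small support, extending $f$ by zero outside $\Omega$ before convolving, and restricting the result to $\Omega$ at the end, is compatible with the $W^{-s,q}(\Omega)$ norm defined through $(I-\Delta)^{-s/2}$ as in Section~2, and does not introduce any loss beyond harmless constants. The estimates are then safely controlled by checking the two consistency limits: $s=0,\,q=p$ (which reduces to standard Young's inequality for mollifiers) and the integer-order scaling of $\partial^{\beta}j_{\alpha}$, both of which match the exponents stated.
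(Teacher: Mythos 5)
The paper states Lemma \ref{ly} without proof, so there is no internal argument to compare yours against; your proposal has to stand on its own, and in substance it does. For \eqref{y1}, the $L^{2}$ endpoint via $f(x)-f(x-y)=\int_{0}^{1}y\cdot\nabla f(x-ty)\,dt$ plus Minkowski, the endpoint Sobolev bound at $p^{*}$, and Riesz--Thorin interpolation (equivalently a single Gagliardo--Nirenberg step applied to $f-f\ast j_{\alpha}$, which is the cleaner route in $d=2$ where $\dot H^{1}(\R^{2})$ embeds in no $L^{p^{*}}$ --- you correctly flag this) is the standard proof, and your exponent check $\theta=1-\sigma$ is right. For \eqref{y2}, reducing to Young's inequality with $1/r'=1/q-1/p$ together with the dilation identity $\|\partial^{\beta}j_{\alpha}\|_{L^{r}}=\alpha^{-|\beta|-d/r'}\|\partial^{\beta}j\|_{L^{r}}$ is also the standard argument.

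Two caveats. First, your computation for \eqref{y2} produces $\alpha^{-s-d\left(\frac{1}{q}-\frac{1}{p}\right)}$, which is \emph{not} the exponent $\alpha^{s-d\left(\frac{1}{q}-\frac{1}{p}\right)}$ printed in the statement; these agree only for $s=0$. The printed exponent is a sign typo: in \eqref{6.1.3} the lemma is applied with $s=2$, $q=4$, $d=3$ and yields $\alpha^{-2-3\left(\frac{1}{4}-\frac{1}{p}\right)}$, exactly your version. So you should not claim to have "produced the announced estimate"; state the corrected exponent and note the discrepancy. Second, the assertion that extension by zero does not increase $\|\nabla f\|_{L^{2}}$ is valid only for $f$ with vanishing trace; for general $f\in\dot H^{1}(\Omega)$ you need a bounded extension operator (available since $\partial\Omega$ is compact and smooth), at the price of a domain-dependent constant. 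The analogous point for \eqref{y2} deserves one explicit sentence rather than "harmless constants": for negative-order spaces extension by zero of $f$ is not the right operation; using the paper's definition $W^{-s,q}(\Omega)=(I-\Delta)^{s/2}L^{q}(\Omega)$ you should pick a representative $g\in L^{q}$ with $f=(I-\Delta)^{s/2}g$, extend $g$, and run the Fourier-multiplier commutation on $\R^{d}$, which is essentially what you sketch. With these two repairs the proof is complete.
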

Finally, we need to  recall  the following compactness tool (see \cite{Si}).
\begin{theorem}
\label{LA}
Let be $\mathcal{F}\subset L^{p}([0,T];B)$,  $1\leq p<\infty$, $B$ a Banach space. $\mathcal{F}$ is relatively compact in  $L^{p}([0,T];B)$ for $1\leq p<\infty$, or in $C([0,T];B)$ for $p=\infty$ if and only if 
\begin{itemize}
\item[{\bf (i)}]
$\displaystyle{\left\{\int_{t_{1}}^{t_{2}}f(t)dt,\ f\in B\right\}}$ is relatively compact in $B$, $0<t_{1}<t_{2}<T$,
\item[{\bf (ii)}]
$\displaystyle{\lim_{h\to 0}\|f(t+h) - f(t)\|_{L^{p}([0, T-h];B)}=0}$ uniformly for any $f \in \mathcal{F}$.
\end{itemize}
\end{theorem}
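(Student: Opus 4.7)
The plan is to prove the two directions of the characterization separately, with the sufficiency being the substantive content. The statement is a version of the Aubin--Lions--Simon compactness lemma, so I would follow Simon's strategy based on time-averaging.

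For necessity, I would argue that both conditions are inherited from relative compactness of $\mathcal{F}$ in $L^{p}([0,T];B)$. For (i), I would observe that the averaging map $A_{t_1,t_2}: L^{p}([0,T];B) \to B$ defined by $A_{t_1,t_2}(f) = \int_{t_1}^{t_2} f(t)\,dt$ is a bounded linear operator (by H\"older in time), so it carries relatively compact sets to relatively compact sets. For (ii), the uniform translation continuity of a relatively compact subset of $L^{p}([0,T];B)$ is a standard fact, obtained by covering $\mathcal{F}$ with finitely many $L^{p}$-balls around a finite $\varepsilon$-net $\{f_1,\dots,f_N\}$, using translation continuity of each $f_i$, and combining via triangle inequality.

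For sufficiency, the core idea is to approximate the family by a time-mollified one and apply Arzel\`a--Ascoli on a function space valued in $B$. Fix $h>0$ and define the time-average $f_h(t) = \tfrac{1}{h}\int_{t}^{t+h} f(s)\,ds$ for $t\in[0,T-h]$ and $f\in\mathcal{F}$. By assumption (i) applied to $(t_1,t_2)=(t,t+h)$, the set $\{f_h(t): f\in\mathcal{F}\}$ is relatively compact in $B$ at each fixed $t$. Moreover, one can estimate
\begin{equation*}
\|f_h(t)-f_h(s)\|_{B} \leq \frac{1}{h}\int_{0}^{h}\|f(t+\tau)-f(s+\tau)\|_{B}\,d\tau,
\end{equation*}
and combining this with (ii) (after a H\"older argument in $\tau$ to handle the $L^{p}$-in-time norm) gives equicontinuity of $\{f_h\}_{f\in\mathcal{F}}$ in $C([0,T-h];B)$. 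Arzel\`a--Ascoli then yields that $\mathcal{F}_h := \{f_h : f\in\mathcal{F}\}$ is relatively compact in $C([0,T-h];B)$, and hence in $L^{p}([0,T-h];B)$.

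The final step is a diagonal/approximation argument: assumption (ii) guarantees that $\|f-f_h\|_{L^{p}([0,T-h];B)} \to 0$ as $h\to 0$ \emph{uniformly} in $f\in\mathcal{F}$, since $f(t)-f_h(t) = \tfrac{1}{h}\int_0^h (f(t)-f(t+\tau))\,d\tau$. Given $\varepsilon>0$ choose $h$ so that this uniform error is below $\varepsilon/2$; then cover $\mathcal{F}_h$ by finitely many $B$-balls of radius $\varepsilon/2$ in $L^{p}([0,T-h];B)$. By the triangle inequality this produces a finite $\varepsilon$-net for $\mathcal{F}$ in $L^{p}([0,T-h];B)$, and a simple truncation/extension argument handles the tail near $T$ to conclude relative compactness in $L^{p}([0,T];B)$. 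The case $p=\infty$ with target $C([0,T];B)$ is analogous, replacing the $L^{p}$-approximation by uniform approximation. The main obstacle I anticipate is making the equicontinuity estimate for $f_h$ truly uniform over $\mathcal{F}$ using only the integrated hypothesis (ii); the key is to absorb the $L^{p}$-in-time modulus of continuity into the pointwise-in-$t$ estimate via a careful application of H\"older's inequality.
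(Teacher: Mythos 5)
The paper does not actually prove Theorem \ref{LA}: it is quoted as a ready-made ``compactness tool'' from Simon \cite{Si}, so there is no internal argument to compare yours against. Your outline is, in substance, a correct reconstruction of Simon's own proof: necessity via boundedness of the averaging operator $A_{t_1,t_2}$ and an $\varepsilon$-net argument for translation continuity; sufficiency via the Steklov averages $f_h(t)=\frac{1}{h}\int_t^{t+h}f(s)\,ds$, with (i) giving pointwise relative compactness of $\{f_h(t):f\in\mathcal{F}\}$ in $B$, with (ii) plus H\"older in $\tau$ giving equicontinuity, Arzel\`a--Ascoli giving relative compactness of $\mathcal{F}_h$ in $C([0,T-h];B)$, and the Minkowski-integral estimate $\|f-f_h\|_{L^p([0,T-h];B)}\le\sup_{0\le\tau\le h}\|f(\cdot+\tau)-f\|_{L^p([0,T-\tau];B)}$ giving the uniform approximation. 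All of these steps are sound.

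The one place you should not call ``simple'' is the endpoint. Total boundedness of $\mathcal{F}$ in $L^p([0,T-h];B)$ for every $h>0$ does not by itself imply total boundedness in $L^p([0,T];B)$: the family $f_n=n^{1/p}\chi_{(T-1/n,T)}\,b$, with $b\in B$ a fixed unit vector, is relatively compact in every $L^p([0,T-h];B)$ yet has no convergent subsequence in $L^p([0,T];B)$; it is hypothesis (ii) that excludes such concentration at $t=T$ (one checks $\|f_n(\cdot+h)-f_n\|_{L^p([0,T-h];B)}=1$ whenever $n\ge 1/h$). So the tail must be handled by invoking (ii) again, not by a generic truncation or extension. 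Two standard repairs: either derive from (i)--(ii) the uniform smallness $\sup_{f\in\mathcal{F}}\|f\|_{L^p([T-\eta,T];B)}\to 0$ as $\eta\to 0$, which is the content of a separate lemma in \cite{Si}; or note that the forward differences in (ii) are backward differences on $[h,T]$, run your mollification argument symmetrically with $\check{f}_h(t)=\frac{1}{h}\int_{t-h}^{t}f(s)\,ds$ to obtain total boundedness in $L^p([T/2,T];B)$ as well, and glue the two halves. With that repair the argument is complete, and the $p=\infty$ case with target $C([0,T];B)$ is indeed analogous.
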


\section{Approximating system and main result}
As we explained in the Introduction in order to approximate the system \eqref{3.1} we introduce the following system
\begin{equation}
\begin{cases}
\displaystyle{\partial_{t}\ue+\nabla \pe=\mu\Delta \ue-\left(\ue\cdot\nabla\right)\ue -\frac{1}{2}(\dive \ue)\ue}+f^{\e}, \\
\e\partial_{t}\pe+ \dive \ue=0,
\end{cases}
\label{3.2}
\end{equation}
where $(x,t)\in \Omega\times [0,T]$, $\ue=\ue(x,t)\in\R^{3}$  and $\pe=\pe(x,t)\in \R$, $f^{\e}=f^{\e}(x,t)\in \R^{3}$. 
As we can notice the constraint ``$\dive u=0$'' of the system \eqref{3.1} has been replaced by the evolution equation
\begin{equation*}
\partial_{t}\pe=-\frac{1}{\e}\dive\ue,
\end{equation*}
which can be seen as the linearization  around a constant state of the continuity equation in the case of a compressible fluid.
Concerning the first equation of the system \eqref{3.2} we can observe that compared to the equation of the balance of momentum it has the extra term $-1/2(\dive\ue)\ue$ which has been added as a correction  to avoid the paradox of increasing the kinetic energy along the motion. 

Since it will not affect our approximation process, for semplicity, from now on, we will take $\mu=1$ and $f^{\e}=0$.  

Furthermore we assign to the system \eqref{3.2} the following two initial conditions
\begin{equation*}
\ue(x,0)=\ue_{0}(x), \qquad \pe(x,0)=\pe_{0}(x).
\end{equation*}
It is worth to mention here that  the Navier Stokes equations require only one initial condition on the velocity $u$. Hence our approximation will be consistent if the initial datum on the pressure will be eliminated by an ``initial layer'' phenomenon which will be a consequence of the dispersive nature of the acoustic pressure waves. Since in the limit we have to deal with Leray solutions it is reasonable to require the finite energy constraint to be satisfied by the approximating sequences $(\ue,\pe)$. So we can deduce a natural behaviour to be imposed on the initial data $(\ue_{0},\pe_{0})$, namely
\begin{equation}
\tag {\bf{ID}}
\begin{split}
& \ue_{0}=\ue(\cdot, 0)\longrightarrow u_{0}=u(\cdot ,0)\ \text{strongly in}\  L^{2}(\Omega)\\ 
&\sqrt{\e}\pe_{0}=\sqrt\e \pe(\cdot,0)\longrightarrow 0\  \text{strongly in} \  L^{2}(\Omega).
\end{split}
\label{ID}
\end{equation}
Let us remark that  the convergence of $\sqrt{\e}\pe_{0}$ to $0$ is necessary to avoid  the presence of concentrations of energy.

Since we are in an exterior domain the system \eqref{3.2} needs to be supplemented with boundary data. 
Taking into account the Navier Stokes equations \eqref{3.1} the natural choice is to assign homogenous Dirichlet boundary condition to the velocity vector field $\ue$, namely
\begin{equation}
\ue|_{\partial\Omega}=0.
\tag{\bf{BC1}}
\label{B1}
\end{equation}
For $\pe$ the matter is more delicate.
Assuming that everything is smooth we would like to assume homogenous Dirichlet boundary condition for $\pe$. However if we consider the second equation of the system \eqref{3.2} we can rewrite the pressure as $\pe_{t}=-1/\e\dive \ue$, from the energy  priori estimate we only know that $\dive \ue$ uniformly  bounded in $L^{2}$, hence we don't have sufficient regularity in order to define a trace for $\pe$.
As we will see, in the Section \ref{secpres}, we are going to decompose the pressure in two parts. One connected with the viscosity part of the fluid and the other one with the convective terms. So, our acoustic pressure waves are given by the superposition of two different waves with different frequencies scales. These different scales provide different level of regularity for $\pe$, therefore on the boundary we have to assume
\begin{equation}
\Delta^{-\alpha}\pe|_{\partial\Omega}=0, \quad \text{for any $\alpha\geq \frac{1}{2}$}.
\tag{\bf{BC2}}
\label{B2}
\end{equation}
This condition seems to be in contrast with the boundary data usually associated to the pressure of the Navier Stokes equation, indeed, as it is well known, if one considers the Navier Stokes equation the natural behaviour for the pressure at the boundary is of Neumann type.  Chorin in  \cite{Ch67} dealt with the same issue. In \cite{Ch67} he considered  the artificial compressibility approximation method in the $2-D$ case. Let us denote the space coordinates by $(x_{1}, x_{2})$ and the boundary given by the line $x_{2}=0$, moreover denote by
$$u^{n}_{m(i,j)}=u(i\Delta x_{1}, j\Delta x_{2}, n\Delta t) \qquad p_{i,j}^{n}=p(i\Delta x_{1}, j\Delta x_{2}, n\Delta t)$$
the approximation of the sequences $\ue$ and $\pe$. In  \cite{Ch67}  Chorin assigns Dirichlet boundary condition to $p_{i,j}^{n}$ in the following way, namely if the boundary line $x_{2}=0$ is represented by $j=1$,  then  taking into account the equation $\e\pe_{t}+\dive \ue=0$, he writes $p_{i,1}^{n+1}$:
$$\e p_{i,1}^{n+1}-\e p_{i,1}^{n-1}=-2\frac{\Delta t}{\Delta x_{2}}(u^{n}_{2(i,2)}-u^{n}_{2(i,1)})-\frac{\Delta t}{\Delta x_{1}}(u^{n}_{1(i+1,1)}-u^{n}_{1(i-1,1)}),$$
where $u^{n}_{m(i,j)}$ are known form the previous step.\\
The issue of the correct boundary condition to assign to the pressure was also studied by many other authors. Among them Gresho and Sani \cite{GS87}, (see also \cite{GSSP06}, \cite{AD88}) showed that Dirichlet and Neumann boundary conditions for the pressure give the same solution. In fact they recover appropriate Dirichlet boundary conditions for the pressure by taking into account the normal derivative  of the pressure associated with the Neumann boundary condition and by using Green's functions. 
To be more precise, in our case we can say that in the limit the pressure plays the role  of the Lagrange multiplier associated with the constrain $\dive u=0$. In this regard, in analogy with problems of motion  of constrained  rigid bodies, the behaviour of the pressure must be deduced in terms of the velocity field $u$. 
\begin{remark}
For semplicity we assume homogenous boundary conditions \eqref{B1} and \eqref{B2}.  If we are in the nonhomogeneous case, let say for example $\ue|_{\partial\Omega}=u_{\Gamma}$, by changing $\ue$ into $\ue-u_{\Gamma}$, as usual,  we come into the case of zero Dirichlet boundary conditions.
\end{remark}


Now we can state our main result. The convergence of $\{\ue\}$ will be  described by analyzing the convergence of the associated Hodge decomposition.
\begin{theorem}
Let $(\ue,\pe)$ be a sequence of weak solution  of the system \eqref{3.2}, assume that the initial data satisfy \eqref{ID} and the boundary conditions \eqref{B1} and \eqref{B2}  hold. Then 
\begin{itemize}
  \item [\bf{(i)}] There exists $u\in L^{\infty}([0,T];L^{2}(\Omega))\cap L^{2}([0,T];\dot H^{1}(\Omega))$ such that 
  \begin{equation*}
\ue\rightharpoonup u \quad \text{weakly in $L^{2}([0,T];\dot H^{1}(\Omega))$}.
\end{equation*}
  \item [\bf{(ii)}] The gradient component $Q\ue$ of the vector field $\ue$ satisfies
  \begin{equation*}
Q\ue\longrightarrow 0\quad \text{ strongly in $L^{2}([0,T];L^{p}(\Omega))$, for any $p\in [4,6)$}.
\end{equation*}
 \item [\bf{(iii)}] The divergence free component $P\ue$ of the vector field $\ue$ satisfies
   \begin{equation*}
P\ue\longrightarrow Pu=u\quad \text{strongly  in $L^{2}([0,T];L^{2}_{loc}(\Omega))$}.
\end{equation*}
 \item [\bf{(iv)}] The sequence $\{\pe\}$ will converge in the sense of distribution  to 
 \begin{equation*}
p=\Delta^{-1}div \left((u\cdot\nabla)u\right)=\Delta^{-1}tr((Du)^{2}).
\end{equation*}
\item [\bf{(v)}] $u=Pu$ is a Leray weak solution to the incompressible Navier Stokes equation
\begin{align*}
&P(\partial_{t} u-\Delta u+(u\cdot\nabla)u)=0,\\
&u(x,0)=u_{0}(x),\qquad u|_{\partial\Omega}=0.
\end{align*}
\item [\bf{(vi)}]  The following energy inequality holds for all $t\in[0,T]$,
\begin{equation}
\frac{1}{2}\int_{\Omega}|u(x,t)|^{2}dx+\int_{0}^{t}\!\!\int_{\Omega}|\nabla u(x,t)|^{2}dxdt\leq 
\frac{1}{2}\int_{\Omega}|u_{0}(x)|^{2}dx.
\label{en}
\end{equation}
\end{itemize}
\label{tM}
\end{theorem}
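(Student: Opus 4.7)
The plan is to proceed through the classical compactness program adapted to the exterior-domain acoustic analysis outlined in the introduction. First I would derive the basic energy estimate by formally testing the first equation of \eqref{3.2} with $\ue$ and the second with $\pe$, exploiting the cancellation produced by the correction term $-\frac12(\dive\ue)\ue$; together with the boundary conditions \eqref{B1}--\eqref{B2} and the initial data \eqref{ID} this yields uniform bounds of $\ue$ in $L^\infty_t L^2_x\cap L^2_t\dot H^1_x$ and of $\sqrt{\e}\,\pe$ in $L^\infty_t L^2_x$. From the second equation of \eqref{3.2} one also obtains $\dive\ue = -\e\,\partial_t\pe$ small in suitable negative Sobolev norms. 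Up to a subsequence these bounds provide a weak limit $u\in L^\infty_tL^2_x\cap L^2_t\dot H^1_x$ with $\dive u=0$, giving \textbf{(i)}.

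The heart of the argument is the strong convergence of the gradient part $Q\ue$, and this is where the main obstacle lies. Applying $Q$ to the first equation of \eqref{3.2} and combining with the second, one finds that $\pe$ (and $Q\ue$ via $Q\ue=\nabla\Delta_N^{-1}\dive\ue$) satisfies a wave-type equation of the form $\e\partial_t^2\pe - \Delta\pe = \dive F^{\e}$ where $F^{\e}$ contains the viscous term $\Delta\ue$ and the convective term $(\ue\cdot\nabla)\ue+\frac12(\dive\ue)\ue$. Because these two forcings live at different regularity scales, and because the natural boundary condition for $\pe$ in the limit is of Neumann type while Strichartz estimates \eqref{2.2.1} require Dirichlet data, I would split $\pe = p^{\e}_1+p^{\e}_2$ into two acoustic components solving separate initial-boundary value problems with homogeneous Dirichlet conditions in the sense of \eqref{B2}, each forced by one piece of $F^{\e}$. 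Rescaling time by $\tau=t/\sqrt\e$ to reduce to the unit-speed wave equation and applying the Strichartz estimates \eqref{s1} and \eqref{s2} (the latter tailored to handle the $L^1_tL^{3/2}_x$ integrability of the convective piece in dimension $3$) yields decay of $\pe$ in mixed $L^q_tL^r_x$ norms. Transferring this back to $Q\ue$ and interpolating with the uniform $L^2_t\dot H^1_x$ bound gives \textbf{(ii)}: $Q\ue\to 0$ strongly in $L^2_tL^p_x$ for $p\in[4,6)$.

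For the divergence-free part I would apply the Leray projector $P$ to the first equation of \eqref{3.2}; the pressure drops out and one obtains
\begin{equation*}
\partial_t P\ue = P\Delta\ue - P\bigl((\ue\cdot\nabla)\ue+\tfrac12(\dive\ue)\ue\bigr),
\end{equation*}
so $\partial_t P\ue$ is bounded in $L^{4/3}_tW^{-1,s}_x$ for a suitable $s$ thanks to the $L^p$-boundedness of $P$ on exterior domains. Combined with $P\ue\in L^2_t\dot H^1_x$, the Aubin--Lions--Simon theorem (Theorem \ref{LA}) applied on bounded subdomains of $\Omega$ provides the local-in-space strong compactness of $P\ue$ in $L^2_tL^2_{loc}$, giving \textbf{(iii)}. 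The mollification Lemma \ref{ly} is used to turn the negative-Sobolev time-regularity into genuine $L^p$ control after regularization.

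With the strong convergences in hand the passage to the limit in the nonlinear term $(\ue\cdot\nabla)\ue+\frac12(\dive\ue)\ue$ is standard: write $\ue=P\ue+Q\ue$, expand the product, and note that each cross term contains either a factor $Q\ue$ converging strongly to zero in $L^2_tL^p_x$ with $p\geq 4$, or the factor $P\ue$ converging strongly in $L^2_tL^2_{loc}$, while the other factor stays in the energy class. Testing \eqref{3.2} against divergence-free test functions eliminates the pressure gradient; identifying the limit yields \textbf{(v)}, and inverting the divergence of the momentum equation in the limit gives \textbf{(iv)}. Finally, \textbf{(vi)} follows from the weak lower semicontinuity of the norms applied to the $\e$-level energy identity together with the convergence of the initial energy encoded in \eqref{ID}, observing that the $\sqrt\e\,\pe$ contribution vanishes in the limit. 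The most delicate and technically demanding step is unquestionably the pressure decomposition and the application of the Strichartz estimates on the non-trapping exterior domain, since it is there that the tension between the physical Neumann datum and the required Dirichlet datum has to be rigorously resolved.
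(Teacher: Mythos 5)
Your proposal follows essentially the same route as the paper: energy estimates, the time-rescaled acoustic wave equation for $\pe$ split into a viscous component and a convective component each with Dirichlet data so that the exterior-domain Strichartz estimates \eqref{s1}--\eqref{s2} apply, the mollifier/interpolation argument trading the negative-Sobolev smallness of $\e\partial_t\pe$ against the $\dot H^1$ bound to get $Q\ue\to 0$, compactness in time for $P\ue$ via Theorem \ref{LA}, and the standard limit passage plus lower semicontinuity for the energy inequality. The only cosmetic difference is that the paper verifies the time-translate criterion of Simon's theorem directly (again via the mollifier Lemma \ref{ly}) rather than bounding $\partial_t P\ue$ in a negative Sobolev space, but these are equivalent packagings of the same compactness argument.
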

\begin{remark}
Notice that in the limit we can recover the Dirichlet boundary condition for $u$ since the approximating sequence $\ue\in L^{2}([0,T];\dot H^{1}(\Omega))$ and the trace operator is continuos.  
\end{remark}
\begin{remark}
This theorem can be easily extended to the nonhomogeneous equation \eqref{3.2}, by assuming 
\begin{equation*}
f^{\e}\longrightarrow f\qquad \text{strongly in $L^{2}([0,T];H^{-1}(\Omega))$}.
\end{equation*}
\end{remark}

\section{Energy estimates}
In this section we wish to establish the a priori estimates, independent on $\e$, for the solutions of the system \eqref{3.2} which are necessary to prove the Theorem \ref{tM}.
In particular we will recover the  a priori estimates that come from the classical energy estimates related to the system \eqref{3.2}. 
\begin{proposition}
Let us consider the solution $(\ue, \pe)$ of the Cauchy problem for the system \eqref{3.2}. Assume that the hypotheses \eqref{ID}  and the condition \eqref{B1} hold, then one has
\begin{equation}
\label{4.1}
E(t)+\int_{0}^{t}\!\!\int_{\Omega}|\nabla \ue(x,s)|^{2}dxds=E(0),
\end{equation}
where we set
\begin{equation}
\label{4.2}
E(t)=\int_{\Omega}\left( \frac{1}{2}|\ue(x,t)|^{2}+ \frac{\e}{2} |\pe(x,t)|^{2}\right)dx.
\end{equation}
Moreover,
\begin{align} 
& \sqrt{\e}\pe &\quad & \text{is bounded in $L^{\infty}([0,T];L^{2}(\Omega))$,} \label{4.3}\\
& \e\pe_{t} &\quad & \text{is relatively compact in $H^{-1}([0,T]\times \Omega),$}  \label{4.4}\\
& \nabla\ue &\quad & \text{is bounded in $L^{2}([0,T]\times\Omega),$}  \label{4.5}\\
& \ue &\quad & \text{is bounded in $L^{\infty}([0,T];L^{2}(\Omega))\cap L^{2}([0,T];L^{6}(\Omega)),$}  \label{4.6}\\
(&\ue \!\cdot\!\nabla)\ue &\quad & \text{is bounded in $L^{2}([0,T];L^{1}(\Omega))\cap L^{1}([0,T];L^{3/2}(\Omega)),$}  \label{4.7}\\
(& \dive\ue)\ue &\quad & \text{is bounded in $L^{2}([0,T];L^{1}(\Omega))\cap L^{1}([0,T];L^{3/2}(\Omega)).$}\label{4.8}
\end{align}
\label{p4.1}
\end{proposition}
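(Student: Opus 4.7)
My plan is to derive the energy identity \eqref{4.1} directly, then read off the uniform bounds \eqref{4.3}--\eqref{4.8} as consequences.

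\emph{Step 1 (Energy identity).} I would test the first equation of \eqref{3.2} against $\ue$ and the second against $\pe$. Every boundary contribution generated by integration by parts vanishes thanks to \eqref{B1}: the viscous term yields $\int_\Omega|\nabla\ue|^2$, and the pressure coupling yields $-\int_\Omega \pe\,\dive\ue$ from the first equation, which exactly cancels the term $+\int_\Omega \pe\,\dive\ue$ coming from multiplying the second equation by $\pe$. The key algebraic point is that the corrective term $-\tfrac12(\dive\ue)\ue$ was built precisely so that
$$\ue\cdot(\ue\cdot\nabla)\ue+\tfrac12(\dive\ue)|\ue|^2=\tfrac12\dive\bigl(|\ue|^2\ue\bigr),$$
whose integral vanishes by \eqref{B1}. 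Adding the two tested equations and integrating in time produces $E(t)+\int_0^t\|\nabla\ue\|_{L^2}^2\,ds=E(0)$, i.e.\ \eqref{4.1}.

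\emph{Step 2 (Reading off the linear bounds).} From \eqref{4.1} and the initial-data hypothesis \eqref{ID}, which bounds $E(0)$ uniformly in $\e$, the estimates \eqref{4.3}, \eqref{4.5} and the $L^\infty_t L^2_x$ half of \eqref{4.6} are immediate. The $L^2_t L^6_x$ half of \eqref{4.6} follows from the Sobolev embedding $H^1_0(\Omega)\hookrightarrow L^6(\Omega)$ in $d=3$, combined with \eqref{B1} so that $\ue(\cdot,t)\in H^1_0(\Omega)$.

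\emph{Step 3 (Nonlinear bounds).} For \eqref{4.7} and \eqref{4.8} I would use H\"older. The $L^2_t L^1_x$ estimate comes from $\|(\ue\!\cdot\!\nabla)\ue\|_{L^1_x}\le\|\ue\|_{L^2_x}\|\nabla\ue\|_{L^2_x}$ with $\ue\in L^\infty_t L^2_x$ and $\nabla\ue\in L^2_t L^2_x$. For $L^1_t L^{3/2}_x$, use $\|(\ue\!\cdot\!\nabla)\ue\|_{L^{3/2}_x}\le\|\ue\|_{L^6_x}\|\nabla\ue\|_{L^2_x}$ together with $L^2_t\cdot L^2_t\subset L^1_t$. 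The estimates for $(\dive\ue)\ue$ are identical, with $\dive\ue$ bounded by $|\nabla\ue|$ pointwise.

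\emph{Step 4 (Compactness of $\e\pe_t$).} The second equation gives $\e\partial_t\pe=-\dive\ue$, which is bounded in $L^2([0,T]\times\Omega)$ by \eqref{4.5}. Writing $\dive\ue=\sum_i\partial_i\ue_i$ with $\ue_i\in L^2([0,T];H^1(\Omega))$ and invoking Rellich--Kondrachov locally yields relative compactness of $\dive\ue$ in $H^{-1}([0,T]\times\Omega)$, hence of $\e\partial_t\pe$.

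The only subtle point is making the integrations by parts rigorous: one would first carry out the energy computation on smooth approximants and then pass to the limit, relying on \eqref{B1} to kill every boundary integral. Note that \eqref{B2} plays no role at this stage; it will enter later when the wave equation for $\pe$ and the Strichartz estimates of Section \ref{secstr} are invoked. Once the identity \eqref{4.1} is in hand, the remaining bounds are straightforward applications of H\"older and Sobolev.
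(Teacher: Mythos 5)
Your proposal is correct and follows essentially the same route as the paper: multiply the first equation of \eqref{3.2} by $\ue$ and the second by $\pe$, sum, integrate by parts using \eqref{B1} (the corrective term $-\tfrac12(\dive\ue)\ue$ making the convective contribution a pure divergence), and then deduce \eqref{4.3}--\eqref{4.8} from \eqref{4.1} via Sobolev embedding and H\"older. You simply supply the details that the paper's one-sentence proof leaves implicit.
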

\begin{proof}
We multiply, as usual,  the first equation of the system \eqref{3.2} by $\ue$ and the second by $\pe$, then we sum up and integrate by parts in space and time, hence  we get \eqref{4.1}. The estimates \eqref{4.3},  \eqref{4.4}, \eqref{4.5} follow from \eqref{4.1}, while \eqref{4.6} follows from \eqref{4.1} and Sobolev embeddings theorems. Finally \eqref{4.7} and \eqref{4.8} come from \eqref{4.5} and \eqref{4.6}.
\end{proof}
\begin{remark}
We want to point out as for  the Navier Stokes equations, here, in order to get the estimate \eqref{4.1} we only used the boundary condition on the velocity vector field $\ue$, no boundary conditions are required for the pressure $\pe$.
\end{remark}

\section{Acoustic pressure wave equation}
\label{secpres}

In order to perform our limiting process we need some more estimates that don't follow from the previous one. In fact from the estimates \eqref{4.5} and \eqref{4.6} we only get the weak convergence of $\ue$.  Here we want to exploit  the dispersive behaviour of the pressure acoustic wave in order to damp its disturbing effect. Let us differentiate with respect to time the equation $\eqref{3.2}_{2}$, by using $\eqref{3.2}_{1}$, we get that $\pe$ satisfies the following wave equation
\begin{equation}
\e\partial_{tt}\pe-\Delta \pe +\Delta \dive\ue-\dive\left(\left(\ue\cdot\nabla\right)\ue +\frac{1}{2}(\dive \ue)\ue \right)=0.
\label{5.1}
\end{equation}
Now we rescale the time variable, the velocity and the pressure in the following way 
\begin{equation}
\label{5.5}
\tau=\frac{t}{\sqrt{\e}}, \quad \ut(x,\tau)=\ue(x,\sqrt{\e}\tau), \quad \pt(x,\tau)=\pe(x,\sqrt{\e}\tau).
\end{equation}
As a consequence of this scaling the equation \eqref{5.1} becomes
\begin{equation}
\label{5.6}
\partial_{\tau\tau}\pt-\Delta\pt =-\Delta \dive \ut+\dive\left(\left(\ut\cdot\nabla\right)\ut +\frac{1}{2}(\dive \ut)\ut\right).
\end{equation}
Now, taking into account the right hand-side of \eqref{5.6} we decompose $\pt$ as the sum of two component $\pt_{1}$ and $\pt_{2}$. In particular we have that $\pt_{1}$ is related to the viscosity of the fluid and satisfies
\begin{equation}
\label{5.7}
\begin{cases}
     \partial_{\tau\tau}\Delta^{-1}\pt_{1}-\Delta\Delta^{-1}\pt_{1} =-\Delta^{-1}\Delta\dive\ut= F_{1} \\
    \Delta^{-1}\pt_{1}(x,0)=\partial_{\tau} \Delta^{-1}\pt_{1}(x,0)=0,\\
    \Delta^{-1} \pt_{1}|_{\partial \Omega}=0,
        \end{cases}
\end{equation}    
While $\pt_{2}$ is connected with the convective part of the fluid and  verifies the  following wave equation:
\begin{equation}
\label{5.8} 
\begin{cases}   
\displaystyle{ \partial_{\tau\tau}\Delta^{-1/2}\pt_{2}-\Delta\Delta^{-1/2}\pt_{2} =\Delta^{-1/2}\dive\left(\left(\ut\cdot\nabla\right)\ut +\frac{1}{2}(\dive \ut)\ut \right)=F_{2}}\\
\Delta^{-1/2}\pt_{2}(x,0)=\Delta^{-1/2}\pt(x,0)  \quad  \partial_{\tau} \Delta^{-1/2}\pt_{2}(x,0)=\partial_{\tau} \Delta^{-1/2}\pt(x,0),\\
 \Delta^{-1/2}\pt_{2}|_{\partial \Omega}=0.
 \end{cases}
 \end{equation}
Therefore we are able to prove the following theorem.
\begin{theorem}
Let us consider the solution $(\ue, \pe)$ of the Cauchy problem for the system \eqref{3.2}. Assume that the hypotheses \eqref{ID} and the boundary conditions \eqref{B1}, \eqref{B2} hold. Then we set the following estimate 
\begin{align}
\hspace{-1mm}\e^{3/8}\|\pe\|_{L^{4}_{t} W^{-2,4}_{x}}+\e^{7/8}\|\partial_{t}\pe\|_{L^{4}_{t} W^{-3,4}_{x}}&\lesssim \sqrt{\e}\|\pe_{0}\|_{L^{2}_{x}}+\|\dive\ue_{0}\|_{\dot H^{-1}_{D}}\notag\\
&+\|\left(\ue\cdot\nabla\right)\ue +\frac{1}{2}(\dive \ue)\ue\|_{L^{1}_{t}L^{3/2}_{x}}\notag\\
&+\sqrt{T}\|\dive \ue\|_{L^{2}_{t}L^{2}_{x}}\label{5.9}.
\end{align}
\label{t5.1.1}
\end{theorem}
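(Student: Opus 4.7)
The plan is to apply the two Strichartz inequalities \eqref{s1} and \eqref{s2} to the wave equations \eqref{5.7} and \eqref{5.8}, then undo the rescaling \eqref{5.5}. The decomposition $\pt=\pt_1+\pt_2$ is engineered precisely so that the forcing in \eqref{5.7} has the regularity needed for \eqref{s1}, namely $F_1=-\dive\ut\in L^2_x$ by \eqref{4.5}, while the forcing in \eqref{5.8} has the regularity needed for \eqref{s2}, namely $F_2\in L^{3/2}_x$ since $\Delta^{-1/2}\dive$ is a bounded (zero-order) operator on $L^{3/2}$ and the convective terms lie in $L^{3/2}$ by \eqref{4.7}--\eqref{4.8}.

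Concretely, set $v=\Delta^{-1}\pt_1$ and $w=\Delta^{-1/2}\pt_2$. Applying \eqref{s1} to $v$ (with zero initial data) and \eqref{s2} to $w$ yields
\[
\|v\|_{L^4_\tau L^4_x}+\|\partial_\tau v\|_{L^4_\tau W^{-1,4}_x}\lesssim \|\dive\ut\|_{L^1_\tau L^2_x},
\]
\[
\|w\|_{L^4_\tau L^4_x}+\|\partial_\tau w\|_{L^4_\tau W^{-1,4}_x}\lesssim \|w(0)\|_{\dot H^{1/2}_D}+\|\partial_\tau w(0)\|_{\dot H^{-1/2}_D}+\|F_2\|_{L^1_\tau L^{3/2}_x}.
\]
The initial data for $w$ are computed from the second equation of \eqref{3.2}: since $\partial_\tau\pt=\sqrt\e\,\partial_t\pe=-\e^{-1/2}\dive\ue$, at $\tau=0$ one has $w(0)=\Delta^{-1/2}\pe_0$ and $\partial_\tau w(0)=-\e^{-1/2}\Delta^{-1/2}\dive\ue_0$. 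Using $\|\Delta^{-1/2}f\|_{\dot H^{s}_D}=\|f\|_{\dot H^{s-1}_D}$ and the embedding $\|f\|_{\dot H^{r}_D}\leq C\|f\|_{\dot H^{s}_D}$ for $r<s$ recalled in Section~2, these two terms are bounded by $\|\pe_0\|_{L^2_x}$ and $\e^{-1/2}\|\dive\ue_0\|_{\dot H^{-1}_D}$ respectively.

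Now undo the rescaling: the change $\tau=t/\sqrt\e$ converts $\|\cdot\|_{L^p_\tau}$ into $\e^{-1/(2p)}\|\cdot\|_{L^p_t}$ on $[0,T]$, and $\partial_\tau=\sqrt\e\,\partial_t$. Bounding $\|\dive\ut\|_{L^1_\tau L^2_x}$ by Cauchy--Schwarz on $\tau\in[0,T/\sqrt\e]$ produces $\e^{-1/2}\sqrt T\,\|\dive\ue\|_{L^2_tL^2_x}$, which is the source of the last term on the right-hand side of \eqref{5.9}. Translating the left-hand sides back to $\pe$ via $\Delta^{-1}:W^{-2,4}_x\to L^4_x$ and $\Delta^{-1/2}:W^{-1,4}_x\to L^4_x$ controls $\pe_1$ in $W^{-2,4}_x$ and $\pe_2$ in $W^{-1,4}_x\subset W^{-2,4}_x$, hence the sum $\pe=\pe_1+\pe_2$ in $W^{-2,4}_x$ (and analogously $\partial_t\pe$ in $W^{-3,4}_x$ after one extra application of $\Delta^{-1/2}$ to accommodate the $\partial_\tau$ term). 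Summing the two estimates and multiplying through by $\sqrt\e$ produces exactly the prefactors $\e^{3/8}=\e^{-1/8}\cdot\e^{1/2}$ and $\e^{7/8}=\e^{-1/8}\cdot\sqrt\e\cdot\e^{1/2}$ on the left, and $\sqrt\e\,\|\pe_0\|_{L^2}$, $\|\dive\ue_0\|_{\dot H^{-1}_D}$ on the right.

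The main technical obstacle is the careful bookkeeping of the $\e$ powers across the three simultaneous conversions (Strichartz application, undoing the time rescaling, and translating the $L^4_x$ estimates into $W^{-k,4}_x$ estimates on $\pe$ through $\Delta^{-\alpha}$), together with the mapping properties of $\Delta^{-1}$ and $\Delta^{-1/2}$ between the Dirichlet homogeneous Sobolev spaces on the exterior domain, where the boundary conditions \eqref{B2} are essential for the wave equations \eqref{5.7}--\eqref{5.8} to fit the framework of the Smith--Sogge/Burq/Metcalfe estimates.
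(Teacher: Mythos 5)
Your proposal is correct and follows essentially the same route as the paper: apply the Strichartz estimate \eqref{s1} to $\Delta^{-1}\pt_{1}$ (with $F_{1}=-\dive\ut$ and zero data, using Cauchy--Schwarz in $\tau$ to pass to the $L^{2}_{\tau}L^{2}_{x}$ norm), apply \eqref{s2} to $\Delta^{-1/2}\pt_{2}$ (with the convective forcing in $L^{1}_{\tau}L^{3/2}_{x}$ and the initial data expressed through $\partial_{\tau}\pt(0)=-\e^{-1/2}\dive\ue_{0}$), and then undo the rescaling \eqref{5.5}. Your $\e$-bookkeeping and the treatment of the initial data via the embedding $\dot H^{s}_{D}\hookrightarrow\dot H^{r}_{D}$ for $r<s$ match the paper's computations \eqref{5.10}--\eqref{5.14} exactly.
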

\begin{proof}
Since $\pt_{1}$ and $\pt_{2}$ are solutions of the wave equations \eqref{5.7}, \eqref{5.8}, we can apply the Strichartz estimates \eqref{s1} and \eqref{s2}, with $(x,\tau)\in \Omega\times\left (0,T/\sqrt \e\right)$.
First of all we use the Strichartz estimate \eqref{s1} with $w=\Delta^{-1}\pt_{1}$ and we get
\begin{equation}
\label{5.10}
\|\Delta^{-1}\pt_{1}\|_{L^{4}_{\tau,x}}+\|\partial_{\tau}\Delta^{-1}\pt_{1}\|_{L^{4}_{\tau} W^{-1,4}_{x}}\lesssim 
\|F_{1}\|_{L^{1}_{\tau}L^{2}_{x},}
\end{equation}
namely
\begin{equation}
\|\pt_{1}\|_{L^{4}_{\tau} W^{-2,4}_{x}}+\|\partial_{\tau}\pt_{1}\|_{L^{4}_{\tau} W^{-3,4}_{x}}\lesssim \frac{\sqrt{T}}{\e^{1/4}}\|\dive \ut\|_{L^{2}_{\tau}L^{2}_{x}}.
\label{5.11}
\end{equation}
In the same way we apply  the estimate \eqref{s2}  with $w=\Delta^{-1/2}\pt_{2}$ and we obtain
\begin{align}
\label{5.12}
\|\Delta^{-1/2}\pt_{2}\|_{L^{4}_{\tau,x}}+\|\partial_{\tau}\Delta^{-1/2}\pt_{2}\|_{L^{4}_{\tau} W^{-1,4}}&\lesssim 
\|\Delta^{-1/2}\partial_{\tau}\pt(x,0)\|_{\dot H^{-1/2}_{D}}+\|F_{2}\|_{L^{1}_{\tau}L^{3/2}_{x},}
\end{align}
namely,
\begin{align}
\label{5.13}
\|\pt_{2}\|_{L^{4}_{\tau} W^{-1,4}_{x}}+\|\partial_{\tau}\pt_{2}\|_{L^{4}_{\tau} W^{-2,4}_{x}}&\lesssim 
\|\pt(x,0)\|_{\dot H^{-1/2}_{D}}+\|\partial_{\tau}\pt(x,0)\|_{\dot H^{-3/2}_{D}}\notag
\\&+\|\left(\ut\cdot\nabla\right)\ut +\frac{1}{2}(\dive \ut)\ut \|_{L^{1}_{\tau}L^{3/2}_{x}.}
\end{align}
Now by taking into account  \eqref{5.11}, \eqref{5.13} it follows that $\pt$ verifies
\begin{align}
\|\pt\|_{L^{4}_{\tau} W^{-2,4}_{x}}+\|\partial_{\tau}\pt\|_{L^{4}_{\tau} W^{-3,4}_{x}}&\leq \|\pt_{1}\|_{L^{4}_{\tau} W^{-2,4}_{x}}+\|\pt_{2}\|_{L^{4}_{\tau} W^{-1,4}_{x}}\\&+
\|\partial_{\tau}\pt_{1}\|_{L^{4}_{\tau} W^{-3,4}_{x}}+\|\partial_{\tau}\pt_{2}\|_{L^{4}_{\tau} W^{-2,4}_{x}}
\notag
\\&\lesssim 
\|\pt(x,0)\|_{\dot H^{-1/2}_{D}}+\|\partial_{\tau}\pt(x,0)\|_{\dot H^{-3/2}_{D}}\notag\\&+
\|\left(\ut\cdot\nabla\right)\ut +\frac{1}{2}(\dive \ut)\ut \|_{L^{1}_{\tau}L^{3/2}_{x}}\notag\\
&+\frac{\sqrt{T}}{\e^{1/4}}\|\dive \ut\|_{L^{2}_{\tau}L^{2}_{x}}.
\label{5.14}
\end{align}
Using backwards the scaling \eqref{5.5} we end up with \eqref{5.9}.
\end{proof}
\begin{remark}
Notice that the right hand-side of \eqref{5.9} is uniformly bounded in $\e$ by means of $E(0)$, see Proposition \ref{p4.1}.
\end{remark}
\section{Strong convergence}
This section is devoted to the proof of the strong convergence of $Q\ue$ and $P\ue$. In particular we will show that the gradient part of the velocity $Q\ue$ converges strongly to $0$, while the incompressible component of the velocity field $P\ue$ converges strongly to $Pu=u$, where $u$ is the limit profile as $\e\downarrow 0$ of $\ue$. We start this section with some easy consequences of the a priori estimates established in the previous section.
\begin{corollary}
\label{c6.1}
Let us consider the solution $(\ue, \pe)$ of the Cauchy problem for the system \eqref{3.2}. Assume that the hypotheses \eqref{ID} and the conditions \eqref{B1}, \eqref{B2} hold. Then, as $\e\downarrow 0$, one has
\begin{align}
&\e\pe\longrightarrow 0 &\quad& \text{strongly in $L^{\infty}([0,T];L^{2}(\Omega))\cap L^{4}([0,T];W^{-2,4}(\Omega))$,}\label{6.1}\\
&\dive \ue \longrightarrow 0 &\quad& \text{strongly in $ W^{-1,\infty}([0,T];L^{2}(\Omega))\cap L^{4}([0,T];W^{-3,4}(\Omega))$}.\label{6.2}
\end{align}
\end{corollary}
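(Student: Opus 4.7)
My plan is to view this corollary as a direct consequence of two ingredients already available: the uniform energy bound \eqref{4.3} together with the Strichartz-type bound \eqref{5.9} of Theorem \ref{t5.1.1}, and the continuity equation $\e\partial_t\pe+\dive\ue=0$ from \eqref{3.2}$_2$. The whole point is just to convert the uniform-in-$\e$ boundedness into strong convergence to $0$ by paying a small power of $\e$; no compactness argument is needed.

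First I would prove \eqref{6.1}. For the $L^\infty_tL^2_x$ piece, the estimate \eqref{4.3} gives $\|\sqrt{\e}\pe\|_{L^\infty_tL^2_x}\leq C$, so
\begin{equation*}
\|\e\pe\|_{L^\infty_tL^2_x}=\sqrt{\e}\,\|\sqrt{\e}\pe\|_{L^\infty_tL^2_x}\leq C\sqrt{\e}\longrightarrow 0.
\end{equation*}
For the $L^4_tW^{-2,4}_x$ piece, I would use \eqref{5.9}: the right-hand side there is uniformly bounded in $\e$ (the initial data terms by \eqref{ID}, the nonlinear term by \eqref{4.7}--\eqref{4.8}, and the last term by \eqref{4.5}), so $\|\pe\|_{L^4_tW^{-2,4}_x}\lesssim \e^{-3/8}$, which yields $\|\e\pe\|_{L^4_tW^{-2,4}_x}\lesssim \e^{5/8}\to 0$.

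Next I would prove \eqref{6.2} by exploiting the continuity equation, which gives the pointwise identity $\dive\ue=-\e\partial_t\pe=-\partial_t(\e\pe)$. For the $W^{-1,\infty}_tL^2_x$ piece, taking any test function $\varphi\in C^\infty_c((0,T))$ and integrating by parts in time gives, in the sense of distributions,
\begin{equation*}
\|\dive\ue\|_{W^{-1,\infty}([0,T];L^2(\Omega))}=\|\partial_t(\e\pe)\|_{W^{-1,\infty}([0,T];L^2(\Omega))}\leq \|\e\pe\|_{L^\infty_tL^2_x}\leq C\sqrt{\e}\longrightarrow 0,
\end{equation*}
where the first inequality is the standard continuity of $\partial_t\colon L^\infty_tB\to W^{-1,\infty}_tB$. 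For the $L^4_tW^{-3,4}_x$ part, the second half of \eqref{5.9} bounds $\e^{7/8}\|\partial_t\pe\|_{L^4_tW^{-3,4}_x}$ uniformly, hence
\begin{equation*}
\|\dive\ue\|_{L^4_tW^{-3,4}_x}=\|\e\partial_t\pe\|_{L^4_tW^{-3,4}_x}\lesssim \e^{1/8}\longrightarrow 0.
\end{equation*}

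I do not expect any serious obstacle; the only thing to check carefully is that every norm appearing on the right of \eqref{5.9} is indeed uniformly bounded in $\e$, which is precisely what Propositions \ref{p4.1} and the assumption \eqref{ID} guarantee. The rest is just bookkeeping of the powers of $\e$.
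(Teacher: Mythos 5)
Your proof is correct and follows exactly the route the paper takes: the paper's own proof is a one-line citation of \eqref{4.3}, \eqref{5.9} and the second equation of \eqref{3.2}, and you have simply spelled out the bookkeeping of the powers of $\e$ (including the check that the right-hand side of \eqref{5.9} is uniformly bounded, which the paper relegates to the remark after Theorem \ref{t5.1.1}). Nothing to add.
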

\begin{proof}
\eqref{6.1}, \eqref{6.2} follow  from the estimates \eqref{4.3}, \eqref{5.9} and the second equation of the system \eqref{3.2}. 
\end{proof}

\subsection{Strong convergence of $Q\ue$}
Here, we wish to show that the gradient part of the velocity field $Q\ue$ goes strongly to $0$ as $\e\downarrow 0$. This will be a consequence of the estimate \eqref{5.9} provided that we observe that by using the second equation of \eqref{3.2} we can rewrite $Q\ue$ as 
$$Q\ue=\nabla\Delta_{N}^{-1}\dive\ue=-\e\nabla\Delta_{N}^{-1}\partial_{t}\pe.$$
\begin{proposition}
Let us consider the solution $(\ue, \pe)$ of the Cauchy problem for the system \eqref{3.2}. Assume that the hypotheses \eqref{ID} and the  conditions \eqref{B1}, \eqref{B2} hold. Then  as $\e\downarrow 0$,
\begin{equation}
Q\ue \longrightarrow 0 \quad \text{strongly in $ L^{2}([0,T];L^{p}(\Omega))$ for any $p\in [4,6)$ }.
\label{6.1.1}
\end{equation}
\label{p6.1.1}
\end{proposition}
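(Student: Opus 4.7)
The plan is to combine the dispersive bound of Theorem~\ref{t5.1.1} with the Friedrichs mollifier estimates of Lemma~\ref{ly}. Starting from the second equation of \eqref{3.2}, $\dive\ue = -\e\partial_{t}\pe$, and the definition \eqref{2.1.1} of the Leray projector $Q$, one has the key identity
\begin{equation*}
Q\ue = \nabla\Delta_{N}^{-1}\dive\ue = -\e\,\nabla\Delta_{N}^{-1}\partial_{t}\pe.
\end{equation*}
Since $\nabla\Delta_{N}^{-1}$ is an operator of order $-1$, the $L^{p}$-theory for the Neumann Laplacian on the exterior domain yields $\nabla\Delta_{N}^{-1}:W^{-3,4}(\Omega)\to W^{-2,4}(\Omega)$ continuously, so by \eqref{5.9} and Proposition~\ref{p4.1},
\begin{equation*}
\|Q\ue\|_{L^{4}_{t}W^{-2,4}_{x}} \lesssim \e\,\|\partial_{t}\pe\|_{L^{4}_{t}W^{-3,4}_{x}} = O(\e^{1/8}) \longrightarrow 0.
\end{equation*}

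Next, I would write, for a parameter $\alpha\in(0,1)$ to be chosen,
\begin{equation*}
Q\ue = (Q\ue - Q\ue\ast j_{\alpha}) + Q\ue\ast j_{\alpha},
\end{equation*}
and estimate the two pieces separately. For the low-frequency error I apply \eqref{y1} with $d=3$ and $\sigma = 3(1/2-1/p)$: since $\nabla Q\ue$ is uniformly bounded in $L^{2}([0,T]\times\Omega)$ by \eqref{4.5} and the $W^{1,2}$-boundedness of $Q$,
\begin{equation*}
\|Q\ue - Q\ue\ast j_{\alpha}\|_{L^{2}_{t}L^{p}_{x}} \leq C\,\alpha^{1-\sigma}.
\end{equation*}
The constraint $p<6$ is precisely what is needed to guarantee $1-\sigma>0$, so this piece tends to $0$ with $\alpha$, uniformly in $\e$. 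For the mollified piece, \eqref{y2} with $s=2$, $q=4$ together with H\"older in time gives
\begin{equation*}
\|Q\ue\ast j_{\alpha}\|_{L^{2}_{t}L^{p}_{x}} \leq T^{1/4}\,C\,\alpha^{2-3(1/4-1/p)}\,\|Q\ue\|_{L^{4}_{t}W^{-2,4}_{x}} = O(\e^{1/8}),
\end{equation*}
where for each fixed $\alpha$ the $\e$-gain comes from the bound derived above.

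To conclude \eqref{6.1.1}, given any $\delta>0$ I first fix $\alpha$ small enough so that the low-frequency error is below $\delta/2$, and then send $\e\to 0$ to make the mollified piece below $\delta/2$. The main obstacle is purely the elliptic regularity step that converts a $W^{-3,4}$ bound on $\partial_{t}\pe$ into a $W^{-2,4}$ bound on $Q\ue$: on $\R^{d}$ this would be immediate from Mikhlin multipliers, but on an exterior domain one must invoke the $L^{p}$-theory for the Neumann problem (cf.\ \cite{FKS07}). Once that is in place, the argument is a clean interpolation between the uniform energy bound from \eqref{4.5} and the vanishing high-regularity--negative-norm bound inherited from the Strichartz estimate.
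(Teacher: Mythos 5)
Your proposal is correct and follows essentially the same route as the paper: the identity $Q\ue=-\e\nabla\Delta_{N}^{-1}\partial_{t}\pe$, the splitting via the Friedrichs mollifier $j_{\alpha}$, estimate \eqref{y1} for the error piece and \eqref{y2} for the mollified piece, all fed by the Strichartz bound \eqref{5.9}. The only differences are cosmetic: the paper balances $\alpha=\e^{1/18}$ to extract an explicit rate $\e^{(6-p)/(36p)}$, whereas your fix-$\alpha$-then-send-$\e\to 0$ argument yields convergence without a rate; also the exponent of $\alpha$ in your mollified piece should be $-2-3(1/4-1/p)$ (the sign of the exponent in \eqref{y2} as printed is a typo, as the paper's own use of it shows), but since at fixed $\alpha$ you only exploit the $\e^{1/8}$ factor, this does not affect your conclusion.
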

\begin{proof}
In order to prove the Proposition \ref{p6.1.1} we split $Q\ue$ as follows
\begin{equation*}
\|Q\ue\|_{L^{2}_{t}L^{p}_{x}}\leq \|Q\ue-Q\ue\ast j_{\alpha}\|_{L^{2}_{t}L^{p}_{x}}+\|Q\ue\ast j_{\alpha}\|_{L^{2}_{t}L^{p}_{x}}=J_{1}+J_{2},
\end{equation*}
where $j_{\alpha}$ is the smoothing kernel defined in Lemma \ref{ly}.
Now we estimate separately $J_{1}$ and $J_{2}$. For $J_{1}$ by using \eqref{y1} we get
\begin{equation}
\label{6.1.2}
J_{1}\leq \alpha^{1-3\left(\frac{1}{2}-\frac{1}{p}\right)}\left(\int_{0}^{T}\|Q\nabla \ue(t)\|_{L^{2}_{x}}^{2} dt\right)\leq \alpha^{1-3\left(\frac{1}{2}-\frac{1}{p}\right)}\|Q\nabla \ue\|_{L^{2}_{t}L^{2}_{x}}.
\end{equation}
Hence from the identity $Q\ue=-\e^{1/8}\nabla\Delta^{-1}_{N}\e^{7/8}\partial_{t}\pe$ and by the inequality \eqref{y2} we get  $J_{2}$ satisfies the following estimate
\begin{align}
J_{2}&\leq \e^{1/8}\|\nabla\Delta^{-1}_{N}\e^{7/8}\partial_{t}\pe\ast\psi\|_{L^{2}_{t}L^{p}_{x}}
\leq \e^{1/8}\alpha^{-2-3\left(\frac{1}{4}-\frac{1}{p}\right)}\|\e^{7/8}\partial_{t}\pe\|_{L^{2}_{t}W^{-3,4}_{x}}\notag\\
&\leq \e^{1/8}\alpha^{-2-3\left(\frac{1}{4}-\frac{1}{p}\right)}T^{1/4}\|\e^{7/8}\partial_{t}\pe\|_{L^{4}_{t}W^{-3,4}_{x}}.
\label{6.1.3}
\end{align}
Therefore,  summing up \eqref{6.1.2} and \eqref{6.1.3} and by using \eqref{4.5} and \eqref{5.9} and remembering that $Q$ is a bounded operator from $L^{2}$ into $L^{2}$, we conclude  for any $p\in [4,6)$ that
\begin{equation}
\|Q\ue\|_{L^{2}_{t}L^{p}_{x}}\leq C\alpha^{1-3\left(\frac{1}{2}-\frac{1}{p}\right)}+C_{T}\e^{1/8}\alpha^{-2-3\left(\frac{1}{4}-\frac{1}{p}\right)}.
\label{6.1.4}
\end{equation}
Finally, we choose $\alpha$ in terms of $\e$ in order that the two terms in the right hand side of the previous inequality have the same order, namely
\begin{equation}
\alpha=\e^{1/18}.
\end{equation}
Therefore we obtain
\begin{equation*}
\displaystyle{\|Q\ue\|_{L^{2}_{t}L^{p}_{x}}\leq C_{T}\e^{ \frac{6-p}{36p}}\quad \text{for any $p\in [4,6)$.}}
\end{equation*}
\end{proof}

\subsection{Strong convergence of $P\ue$}
It remains to prove the strong compactness of the incompressible component of the velocity field.  To achieve this goal we need to prove   some time regularity properties of $P\ue$.
\begin{theorem}
\label{t6.2.1}
Let us consider the solution $(\ue, \pe)$ of the Cauchy problem for the system \eqref{3.2}. Assume that the hypotheses \eqref{ID} and  conditions \eqref{B1}, \eqref{B2} hold. Then  as $\e\downarrow 0$
\begin{equation}
P\ue \longrightarrow Pu, \qquad \text{strongly in $L^{2}(0,T;L^{2}_{loc}(\Omega))$}.
\label{6.2.7}
\end{equation}
\end{theorem}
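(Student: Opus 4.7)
The plan is to apply the Aubin--Lions--Simon compactness criterion of Theorem \ref{LA} to $P\ue$ restricted to $\Omega\cap B_R$ (for arbitrary $R$), combined with a diagonal extraction and the identification of the limit.

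First, I will establish spatial regularity of $P\ue$. From Proposition \ref{p4.1}, $\ue$ is bounded in $L^{\infty}_{t}L^{2}_{x}\cap L^{2}_{t}\dot H^{1}_{x}$. Since $P=I-Q$ and $Q\ue=\nabla\Delta^{-1}_{N}\dive\ue$, standard $L^{2}$ elliptic regularity for the Neumann Laplacian on the exterior domain gives $\|\nabla Q\ue\|_{L^{2}}\lesssim \|\dive\ue\|_{L^{2}}\lesssim \|\nabla\ue\|_{L^{2}}$, so $P\ue$ is uniformly bounded in $L^{\infty}_{t}L^{2}_{x}\cap L^{2}_{t}H^{1}_{x}$. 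On each bounded piece $\Omega\cap B_{R}$, Rellich compactness then yields condition (i) of Theorem \ref{LA} with $B=L^{2}(\Omega\cap B_{R})$: the time averages $\int_{t_1}^{t_2} P\ue\, dt$ form a relatively compact set in $L^{2}(\Omega\cap B_{R})$.

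Next, I will verify condition (ii), the equicontinuity in time. Applying the Leray projector $P$ to the first equation of \eqref{3.2} kills the pressure gradient (since $P\nabla\pe=0$), and gives
\begin{equation*}
\partial_{t} P\ue = P\Bigl(\Delta\ue - (\ue\cdot\nabla)\ue -\tfrac{1}{2}(\dive\ue)\ue\Bigr).
\end{equation*}
Using that $P$ is bounded on every $L^{p}$ for $1<p<\infty$ and on the appropriate negative Sobolev scales, together with estimates \eqref{4.5}, \eqref{4.7}, \eqref{4.8}, each term on the right-hand side is uniformly bounded in a fixed space of the form $L^{1}_{t}W^{-1,q}_{x}+L^{2}_{t}H^{-1}_{x}$. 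To translate this into equicontinuity in the strong $L^{2}(\Omega\cap B_{R})$ norm, I will use the mollification trick already employed in the proof of Proposition \ref{p6.1.1}: split $P\ue=P\ue\ast j_{\alpha}+(P\ue-P\ue\ast j_{\alpha})$. The remainder is controlled in $L^{2}$ by $\alpha\|\nabla P\ue\|_{L^{2}_{t,x}}$ via \eqref{y1}, uniformly in $\e$ and $t$. The smoothed piece has equicontinuous time translates because $\partial_{t}(P\ue\ast j_{\alpha})$ is controlled in $L^{1}_{t}L^{2}_{x}(\Omega\cap B_{R})$ by \eqref{y2}. Sending $\alpha\to 0$ after $h\to 0$ delivers condition (ii).

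Theorem \ref{LA} then gives relative compactness of $\{P\ue\}$ in $L^{2}(0,T;L^{2}(\Omega\cap B_{R}))$ for every $R$; a diagonal extraction produces a subsequence converging strongly in $L^{2}(0,T;L^{2}_{loc}(\Omega))$. To identify the limit, note that by part (i) of Theorem \ref{tM} we have $\ue\rightharpoonup u$ weakly in $L^{2}_{t}H^{1}_{x}$, and by Proposition \ref{p6.1.1} we have $Q\ue\to 0$ strongly. Passing to the limit in $\dive\ue=-\e\partial_{t}\pe$ with \eqref{6.1} gives $\dive u=0$, hence $Qu=0$ and $u=Pu$. Since $P$ is weakly continuous on $L^{2}$, $P\ue\rightharpoonup Pu=u$, and combined with the strong convergence just obtained we conclude $P\ue\to Pu=u$ strongly in $L^{2}(0,T;L^{2}_{loc}(\Omega))$.

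The delicate step will be condition (ii): the fact that $P$ does not commute with cutoffs in an exterior domain means one cannot localize $P\ue$ by direct multiplication. This is precisely what forces the mollification argument (rather than a naive cutoff) to trade spatial smoothness for control of the time increment, and it is where the uniform $L^{2}_{t}H^{1}_{x}$ bound on $P\ue$ is crucially used alongside the negative-Sobolev bound on $\partial_{t}P\ue$.
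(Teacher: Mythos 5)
Your proposal is correct and follows essentially the same route as the paper: Simon's compactness criterion (Theorem \ref{LA}), with the time-equicontinuity obtained by applying $P$ to the momentum equation to remove $\nabla\pe$ and then trading the mollification parameter $\alpha$ (via Lemma \ref{ly}, using the $L^{2}_{t}\dot H^{1}_{x}$ bound for the remainder and \eqref{y2} for the smoothed piece) against the time increment $h$. The paper carries out the same splitting in the slightly more quantitative duality form $\|Pz^{\e}\|^{2}=I_{1}+I_{2}$ with the explicit choice $\alpha=h^{2/5}$, but the underlying argument is identical.
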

\begin{proof}
From the Proposition \ref{p4.1} we know that $P\ue$ is uniformly bounded in $L^{2}_{t}\dot H^{1}_{x}$. The strong convergence \eqref{6.2.7} follows by applying the Theorem \ref{LA} provided  that for all $h\in (0,1)$ we have  
\begin{equation}
\label{6.2.1}
\|P\ue(t+h)-P\ue(t)\|_{L^{2}([0,T]\times \Omega)}\leq C_{T}h^{1/5}.
\end{equation}
Let us set $z^{\e}=\ue(t+h)-\ue(t)$, then we have
\begin{align}
\hspace{-0,25 cm}\|P\ue(t+h)-P\ue(t)\|^{2}_{L^{2}([0,T]\times \Omega)}&=\int_{0}^{T}\!\!\int_{\Omega}dtdx(Pz^{\e})\cdot(Pz^{\e}-Pz^{\e}\ast j_{\alpha})\notag\\&+\int_{0}^{T}\!\!\int_{\Omega}dtdx(Pz^{\e})\cdot(Pz^{\e}\ast j_{\alpha})\notag\\
&=I_{1}+I_{2}.
\label{6.2.2}
\end{align}
By using \eqref{y1} we can estimate $I_{1}$ in the following way
\begin{align}
I_{1}&\leq \|Pz^{\e}\|_{L^{\infty}_{t}L^{2}_{x}}\int_{0}^{T}\|Pz^{\e}(t)-(Pz^{\e}\ast j_{\alpha})(t)\|_{L^{2}_{x}}dt\notag\\&\lesssim \alpha T^{1/2}\|\ue\|_{L^{\infty}_{t}L^{2}_{x}}\|\nabla P\ue\|_{L^{2}_{t,x}}.
\label{6.2.3}
\end{align}
Let us reformulate $Pz^{\e}$ in integral form by using the equation $\eqref{3.2}_{1}$, hence
\begin{align}
\hspace{-0.3cm}I_{2}\leq\left|\int_{0}^{T}\!\!\!dt\!\!\int_{\Omega}\!\!\!dx \!\!\int_{t}^{t+h}\!\!\!ds(\Delta \ue-\left(\ue\cdot\nabla\right)\ue -\frac{1}{2}\ue(\dive \ue)(s,x)\cdot (Pz^{\e}\ast j_{\alpha})(t,x)\right|.
\label{6.2.4}
\end{align}
Then integrating by parts and by using \eqref{y2}, we deduce
\begin{align}
I_{2}&\leq hT^{1/2}\|\ue\|_{L^{\infty}_{t}L^{2}_{x}} \|\nabla\ue\|^{2}_{L^{2}_{t,x}}\notag\\
&+C\alpha^{-3/2}T^{1/2}\|\ue\|_{L^{\infty}_{t}L^{2}_{x}}\left(\!h\!\int_{t}^{t+h}\!\!\!\|\left(\ue\cdot\nabla\right)\ue -\frac{1}{2}(\dive \ue)\ue\|^{2}_{L^{1}_{x}}ds\right)^{1/2}\notag\\
&\leq hT^{1/2}\|\ue\|_{L^{\infty}_{t}L^{2}_{x}}\left(\|\nabla\ue\|_{L^{2}_{t,x}}+C\alpha^{-3/2}\|\left(\ue\cdot\nabla\right)\ue -\frac{1}{2}(\dive \ue)\ue\|_{L^{2}_{t}L^{1}_{x}}\right).
\label{6.2.5}
\end{align}
Summing up $I_{1}$, $I_{2}$ and by taking into account \eqref{4.5}--\eqref{4.8}, we have
\begin{equation}
\|P\ue(t+h)-P\ue(t)\|^{2}_{L^{2}([0,T]\times \Omega)}\leq CT^{1/2}(\alpha +h\alpha^{-3/2}+h),
\label{6.2.6}
\end{equation}
by choosing $\alpha=h^{2/5}$, we end up with \eqref{6.2.1}.
\end{proof}

\section{Proof of the Theorem \ref{tM}}
\begin{itemize}
\item[{\bf (i)}] It follows from the estimate \eqref{4.6}.
\item[{\bf (ii)}] It is a consequence of  the Proposition \ref{p6.1.1}.
\item[{\bf (iii)}] By taking into account the decomposition $\ue=P\ue+Q\ue$, the Theorem \ref{t6.2.1} and the Proposition \ref{p6.1.1} we have that
\begin{equation}
\label{7.1}
P\ue\longrightarrow u \qquad \text{strongly in $L^{2}([0,T];L^{2}_{loc}(\Omega))$.}
\end{equation}
\item[{\bf (iv)}]  Let us apply the Leray projector $Q$ to the equation $\eqref{3.2}_{1}$, then it follows
\begin{equation}
\label{7.2}
\nabla \pe =Q\Delta \ue- Q\left((\ue\cdot\nabla)\ue) +\frac{1}{2} \ue \dive Q\ue\right).
\end{equation}
Now by choosing a test function $\varphi \in C^{\infty}_{0}(\Omega\times[0,T])$ and by taking into account \eqref{4.5}, \eqref{6.1.1}, \eqref{6.2.7} and \eqref{7.1}, we get, as $\e \downarrow 0$, 
\begin{align}
\langle\ue \dive Q\ue, Q\varphi\rangle&\leq\|Q\ue\|_{L^{2}_{t}L^{4}_{x}} \|\nabla\ue\|_{L^{2}_{t}L^{2}_{x}}\|Q\varphi\|_{L^{\infty}_{t}L^{4}_{x}} \notag\\&+ \|Q\ue\|_{L^{2}_{t}L^{4}_{x}} \|\ue\|_{L^{\infty}_{t}L^{2}_{x}}\|\nabla Q\varphi\|_{L^{2}_{t}L^{4}_{x}} \rightarrow 0, 
\end{align}
\begin{align}
\langle Q((\ue\cdot\nabla)\ue),\varphi\rangle&=\langle( P\ue \cdot\nabla) P\ue),Q\varphi\rangle+\langle (Q\ue\cdot\nabla)Q\ue,Q\varphi\rangle\notag\\&\leq\langle (P\ue\cdot\nabla)  P\ue,Q\varphi\rangle\notag+\|Q\ue\|_{L^{2}_{t}L^{4}_{x}} \|\nabla Q\ue\|_{L^{2}_{t,x}}\|\nabla Q\varphi\|_{L^{\infty}_{t}L^{4}_{x}}\notag\\&\rightarrow \langle (u\cdot\nabla)u),Q\varphi\rangle=\langle Q((u\cdot\nabla)u) , \varphi\rangle,
\end{align}
\begin{align}
\langle Q\Delta\ue, \varphi \rangle&=\langle P\ue,  \Delta Q\varphi \rangle +\langle Q\ue, \Delta Q\varphi \rangle \notag\\
&\leq \langle P\ue, \Delta Q\varphi \rangle+\|Q\ue\|_{L^{2}_{t}L^{4}_{x}}\|\Delta Q\varphi\|_{L^{2}_{t}L^{4/3}_{x}}\rightarrow 0,
\end{align}
where in the previous computation we took into account that $Q$ is a continuos operator but doesn't commute with derivatives.
So  as $\e \downarrow 0$ we have, 
\begin{equation}
\label{7.3}
\langle \nabla \pe , \varphi \rangle \longrightarrow\langle \nabla\Delta^{-1}_{N}\dive((u\cdot\nabla)u) , \varphi\rangle. 
\end{equation}
\item[{\bf (v)}]In a similar way we can pass  into the limit inside the system \eqref{3.2} and we get  $u$ satisfies the following equation in $\mathcal{D}'([0,T]\times \Omega)$
\begin{align}
\label{7.4}
&P(\partial_{t} u-\Delta u+(u\cdot\nabla)u)=0,\notag\\
&u(x,0)=u_{0}(x),\qquad u|_{\partial\Omega}=0,
\end{align}
where we used (ID) and (BC1), (BC2) and the fact that the trace operator is bounded (see \cite{Gag57}, \cite{Ada75})
\item[{\bf (vi)}] Finally we prove the energy inequality. By using the weak lower semicontinuity of the weak limits, the hypotheses (ID) and denoting 
by $\chi$ the weak-limit of $\sqrt{\e}\pe $,  we have
\begin{align*}
&\int_{\Omega}\frac{1}{2}|\chi|^{2}dx+\int_{\Omega}\frac{1}{2}|u(x,t)|^{2}dx+\int_{0}^{t}\!\!\int_{\Omega}|\nabla u(x,t)|^{2}dxdt\notag\\&\leq
\liminf_{\e\to 0}\left(\int_{\Omega}\frac{1}{2}|\ue(x,t)|^{2}dx+\int_{\Omega}\frac{\e}{2}|\pe(x,t)|^{2}+\int_{0}^{t}\!\!\int_{\Omega}|\nabla \ue(x,t)|^{2}dxdt\right)\notag\\&=\liminf_{\e\to 0}\int_{\Omega}\frac{1}{2}\left(|\ue_{0}(x)|^{2}+\e|\pe_{0}(x)|^{2}\right)dx=\int_{\Omega}\frac{1}{2}|u_{0}(x)|^{2}dx.
\end{align*}
for any $t\in[0,T]$.
\end{itemize}
\bibliographystyle{amsplain}

\end{document}